\theoremstyle{plain}
\newtheorem{thm}{Theorem}[section]
\newtheorem{prop}[thm]{Proposition}
\theoremstyle{definition}
\newtheorem{dfn}[thm]{Definition}
\theoremstyle{remark}
\newtheorem{rem}[thm]{Remark}
\renewcommand{\tilde}{\widetilde}
\newcommand{\R}{{\mathbb R}}
\newcommand{\Z}{{\mathbb Z}}
\title{Desingularizing special generic maps into 3-dimensional space}
\author{Masayuki Nishioka}
\address{Graduate~School~of~Mathematics, Kyushu University, Motooka~744}
\date{}
\begin{document}

\maketitle

\begin{abstract}
A smooth map between smooth manifolds is called a special generic map
if it has only definite fold points as its singularities.
In this paper, we study the desingularization problem of special generic maps of
closed orientable $n$-dimensional manifolds $M$ into $\R^3$ for $n \geq 5$.
We say that a smooth map $f:M \to \R^p$ is lifted to an immersion or an embedding
$F:M \to \R^k$ $(k>p)$ if $f$ is factorized as $f= \pi \circ F$ for
the standard projection $\pi:\R^k \to \R^p$.
In this paper, we first prove that if $n=5$ or $6$ and $M$ is simply connected, then
a special generic map $f:M \to \R^3$ can be lifted to an embedding into $\R^{n+1}$
if and only if the normal bundle $\nu_{f}$
of the singular point set of $f$ in $M$ is trivial as a vector bundle.
Second, we prove that
for a special generic map $f:M \to \R^3$
of a closed orientable $n$-dimensional manifold $M$,
if $n \geq 5$, $k \geq (3n+3)/2$ and $\nu_{f}$ is trivial, then
$f$ can be lifted to an embedding into $\R^k$.
\end{abstract}

\section{Introduction}

Haefliger \cite{Haefliger} proved that
for a generic smooth map $f:M \to \R^2$ of a closed surface,
there exists an immersion $F : M \to \R^3$ such that $f=\pi \circ F$
for the standard projection $\pi:\R^3 \to \R^2$ if and only if
the number of cusps on each component $C$ of the singular point set of $f$ is even or odd
according as the tubular neighborhood of $C$ in $M$ is orientable or non-orientable.
Here, a smooth map $f:M \to \R^2$ is \emph{generic}
if it has only fold points and cusp points as its singularities.
In particular, not every generic smooth map can be so lifted.

Based on Haefliger's result mentioned above, let us consider the following problem:
``Given a smooth map $f:M \to \R^p$ of a closed $n$-dimensional manifold and
an integer $k$ with $k>n \geq p$,
determine whether or not $f$ can be factorized as $f=\pi \circ F$ for
an immersion or embedding $F:M \to \R^k$ 
and for the standard projection $\pi:\R^k \to \R^p$.''
Such a non-singular map $F$ is called a \emph{lift} of $f$.
We can consider $F$ as a desingularization of $f$.
This lifting problem has been studied in various situations.

Yamamoto \cite{Yamamoto} proved that
a generic smooth map of a closed surface into $\R^2$
can always be lifted to an embedding into $\R^4$.
Saito \cite{Saito} proved that a special generic map
$f:M \to \R^n$ of a closed orientable $n$-dimensional manifold can always be
lifted to an immersion into $\R^{n+1}$.
Here, special generic maps are smooth maps with only definite fold points as their singularities.
Blank and Curley \cite{BC} studied the condition for
a generic smooth map $f:M \to N$ between smooth manifolds of the same dimension
to be lifted to an immersion into a line bundle $\pi:E \to N$.
Note that these results concern the desingularization of generic maps between manifolds of the same dimension.
%So far we have been stated only the case
%where the dimensions of the source and target manifolds of a given singular map are same.

Let us recall the definition of a special generic map. 
A smooth map $f:M \to \R^p$ of a closed $n$-dimensional manifold with $n \geq p$
is called a \emph{special generic map}
if it has only definite fold points as its singularities (for details, see Section 2).
Note that a special generic map into the line is nothing but
a Morse function with only critical points of minimum or maximum indices;
in particular, the source manifold of such a map is homeomorphic to the sphere if it is connected.

Special generic maps were first defined by Burlet and de Rham \cite{burlet},
who showed that a closed $3$-dimensional manifold $M$
admits a special generic map into the plane if and only if
$M$ is diffeomorphic to the $3$-sphere or to the connected sum of a finite number of
total spaces of $S^2$-bundles over $S^1$.
Porto and Furuya \cite{porto} studied the condition for
a closed $n$-dimensional manifold $M$ to admit a special generic map into the plane.
Saeki \cite{saeki} proved that a closed $n$-dimensional manifold $M$ with $n \geq 3$
admits a special generic map into the plane if and only if
$M$ is diffeomorphic to the $n$-dimensional homotopy sphere
($n$-dimensional standard sphere for $n \leq 6$) or to
the connected sum of a finite number of
total spaces of homotopy $(n-1)$-sphere bundles
($(n-1)$-sphere bundles for $n \leq 6$) over $S^1$
and a homotopy $n$-sphere (for $n \geq 7$).

\`{E}lia\v{s}berg \cite{eliasberg} proved that
a closed orientable $n$-dimensional manifold
admits a special generic map into $\R^n$ if and only if
$M$ is stably parallelizable, that is, the Whitney sum of
the tangent bundle of $M$ and the trivial line bundle over $M$ is
trivial as a vector bundle.

Let us now return to the lifting problem.
Let us first review some results about the lifting problem for smooth functions.
Burlet and Haab \cite{BH} proved that
a Morse function $f:M \to \R$ of a closed surface
can always be lifted to an immersion into $\R^3$.
Saeki and Takase \cite{ST} proved that
a special generic map $f:M \to \R$ of
a closed orientable $n$-dimensional manifold with $n \geq 1$
can always be lifted to an immersion into $\R^{n+1}$.
They also proved that a special generic map $f:M \to \R$ of
a closed connected $n$-dimensional manifold with $n \geq 2$
can be lifted to an embedding into $\R^{n+1}$ if and only if
$M$ is diffeomorphic to the $n$-dimensional sphere with the standard smooth structure.
Yamamoto \cite{Yamamoto2} gave a necessary and sufficient condition for a Morse function
on the circle to be lifted to an embedding into $\R^2$.

Let us now review some results about the lifting problem for smooth maps into the plane.
Kushner, Levine and Porto \cite{KLP} studied the lifting problem
for generic smooth maps of $3$-dimensional manifolds into $\R^2$.
Levine \cite{Levine} gave a necessary and sufficient condition
for a generic smooth map $f:M \to \R^2$ of a closed orientable $3$-dimensional manifold
to be lifted to an immersion into $\R^4$.
Saeki and Takase \cite{ST} proved that
a special generic map $f:M \to \R^2$ of
a closed orientable $n$-dimensional manifold with $n \geq 2$
can always be lifted to an immersion into $\R^{n+1}$.
They also proved that
a special generic map $f:M \to \R^2$ of
a closed non-orientable $n$-dimensional manifold with $n \geq 2$
can be lifted to an immersion into $\R^{n+1}$ if and only if
$n=2$, $4$ or $8$, and the tubular neighborhood of
the singular point set in $M$ is orientable.
On the other hand, they also proved that
a special generic map $f:M \to \R^2$ of
a closed connected $n$-dimensional manifold with $n \geq 3$
can be lifted to an embedding into $\R^{n+1}$ if and only if
$M$ is diffeomorphic either to $S^n$
or to the connected sum of a finite number of copies of $S^1 \times S^{n-1}$.
Note that this result does not hold for $n=2$.
Actually, in \cite{ST}, it is proved that
there exists a special generic map $f:S^2 \to \R^2$ which
cannot be lifted to any embedding $F:S^2 \to \R^3$
(but which can be lifted to an immersion $F:S^2 \to \R^3$, since $S^2$ is orientable).

When $n-p=1$, Saeki and Takase \cite{ST} proved that
a special generic map $f:M \to \R^p$ of
a closed orientable $n$-dimensional manifold
can be lifted to an immersion into $\R^{n+1}$ if and only if
the homology class $[S(f)] \in H_{p-1}(M;\Z)$
represented by $S(f)$ vanishes.
Here, $S(f)$ is the set of all singular points of $f$ in $M$.
Note that for a special generic map $f:M \to \R^p$,
the $\Z_2$-homology class $[S(f)]_2 \in H_{p-1}(M;\Z_2)$ is
Poincar\'{e} dual to the Stiefel-Whitney class $w_{n-p+1}(M)$
(see \cite{thom}).
Therefore, if $f$ can be lifted to an immersion into $\R^{n+1}$, then
$M$ is spin, that is, the second Stiefel-Whitney class
$w_{2}(M) \in H^2(M;\mathbb{Z}_2)$ of $M$ vanishes.

When $(n,p)=(5,3)$, $(6,3)$, $(6,4)$ or $(7,4)$,
Saeki and Takase \cite{ST} proved that
a special generic map $f:M \to \R^p$ of
a closed orientable $n$-dimensional manifold
can be lifted to an immersion into $\R^{n+1}$ if and only if $M$ is spin.
They also showed that one can take an embedding as a lift if $(n,p)=(6,3)$.

In this paper, we study the lifting problem for special generic maps of
closed $n$-dimensional manifolds into $\R^3$ for $n \geq 5$.
First, we prove that a special generic map $f:M \to \R^3$ of
a closed simply connected $n$-dimensional manifold $M$, $n=5$ or $6$,
can be lifted to an embedding into $\R^{n+1}$ if and only if
the singular point set of $f$ has a trivial normal bundle in $M$.
Second, we show that for a special generic map $f:M \to \R^3$ of a closed orientable
$n$-dimensional manifold with $n \geq 5$,
the map $f$ can be lifted an embedding into $\R^k$,
if $k \geq (3n+3)/2$ and the normal bundle of
the singular point set $S(f)$ of $f$ in $M$ is trivial.

The paper is organized as follows.
In Section~2, we review various topological properties of special generic maps.
In Section~3, we give a necessary condition for a special generic map
to be lifted to a codimension two immersion
in terms of the normal bundle of the singular point set.
In Section~4,
we construct an embedding lift $F:M \to \R^{n+1}$
for a special generic map $f:M \to \R^3$ of
a closed simply connected $n$-dimensional manifold ($n=5,6$)
with trivial normal bundle of $S(f)$ in $M$,
by using the fact that $O(n-2)$ is a deformation retract of $\mathrm{Diff}(S^{n-3})$,
where $\mathrm{Diff}(S^{n-3})$ is the space of all self-diffeomorphisms of $S^{n-3}$.
This last fact has been proved by Smale \cite{smale} $(n=5)$ 
and Hatcher \cite{Hatcher} $(n=6)$.
By using a similar method, we construct an embedding lift $F:M \to \R^k$
for a special generic map $f:M \to \R^3$ of
a closed orientable $n$-dimensional manifold ($n \geq 5$ and $k \geq (3n+3)/2$)
with trivial normal bundle of $S(f)$ in $M$,
by using the fact that $\mathrm{Emb}(S^{n-3},\R^{k-3})$ is $2$-connected,
where $\mathrm{Emb}(S^{n-3},\R^{k-3})$ is the space of all embeddings of $S^{n-3}$ into $\R^{k-3}$.
This last fact has been proved by Budney \cite{budney}.

Throughout this paper, all manifolds and maps are of class $C^{\infty}$,
unless otherwise indicated.
For groups $G_1$ and $G_2$, ``$G_1 \cong G_2$'' means that they are isomorphic;
for smooth manifolds $M_1$ and $M_2$, ``$M_1 \cong M_2$'' means that they are diffeomorphic;
and for vector bundles $E_1$ and $E_2$, ``$E_1 \cong E_2$'' means that they are isomorphic.
The symbol $\mathbb{R}^n$ denotes the $n$-dimensional Euclidean space;
$D^n$ denotes the closed unit disk in $\mathbb{R}^n$; and
$S^n$ denotes the $n$-dimensional unit sphere in $\mathbb{R}^{n+1}$.
For a manifold $M$ with boundary,
$\mathrm{Int}\, M$ and $\partial M$ denote
the interior and the boundary of $M$, respectively.

\section{Preliminaries}

In this section, we review several results about
topological properties of special generic maps and their Stein factorizations,
which will be necessary in the proof of our main theorems.

\subsection{Special generic maps}

Let $f:M \to \R^p$ be a smooth map of a closed $n$-dimensional manifold, $n \geq p$.
A point $q \in M$ is called a \emph{singular point} of $f$ if
the rank of the differential $df_{q}:T_{q}M \to T_{f(q)}\R^p$
is strictly less than $p$.
We denote by $S(f)$ the set of all singular points of $f$
and call it the \emph{singular point set} of $f$.
A point $q \in M$ is called a \emph{definite fold point} if
there exist local coordinates $x=(x_1,x_2,\ldots,x_n)$ around $q$ and
$y=(y_1,y_2,\ldots,y_p)$ around $f(q)$ such that
\[
\left\{
\begin{array}{l}
y_i \circ f = x_i,\ 1 \leq i \leq p-1, \\
y_p \circ f = x_{p}^2+x_{p+1}^2+\cdots+x_{n}^2.
\end{array}
\right.
\]
When $f$ has no singular points except for definite fold points,
$f$ is called a \emph{special generic map}. 

When $p=1$, special generic maps are nothing but Morse functions
with only critical points of indices $0$ or $n$.

Note that for a special generic map $f:M \to \R^p$,
the singular point set $S(f)$ is a closed $(p-1)$-dimensional submanifold of $M$ and
the restriction of $f$ to $S(f)$ is a codimension one immersion into $\R^p$.

\subsection{Stein factorization}

Let $f:X \to Y$ be a continuous map between topological spaces.
For two points $x_{1}$ and $x_{2}$ in $X$, we define $x_{1} \sim x_{2}$ if
$x_{1}$ and $x_{2}$ are in the same connected component of
the pre-image $f^{-1}(y)$ for a point $y$ in $Y$.
This relation ``$\sim$'' is an equivalence relation, and therefore, we can take
the quotient space $W_{f}$ and the quotient map $q_{f}:X \to W_{f}$
with respect to this relation.
Then it is not difficult to prove that there exists a unique continuous map
$\bar{f}:W_{f} \to Y$ such that the diagram
\[
\xymatrix{
X \ar[dr]^{f} \ar[d]_{q_{f}}& \\
W_{f} \ar[r]_{\bar{f}}& Y \\
}
\]
commutes.
The above diagram is called the \emph{Stein factorization} of $f$.

In general, the quotient space in the Stein factorization of a smooth map
is not always a topological manifold.
However, for a special generic map $f:M \to \R^p$ of a closed $n$-dimensional manifold, $n > p$,
we can give a structure of a smooth $p$-dimensional manifold with boundary
to $W_f$ so that $\bar{f}:W_f \to \R^p$ is an immersion and
$q_f:M \to W_f$ is a smooth map.

Note that for a special generic map $f:M \to \R^n$ of a closed $n$-dimensional manifold
into the Euclidean space of the same dimension,
we have $M=W_f$, since the pre-image $f^{-1}(y)$ is a finite set for any $y \in \R^n$.
So the Stein factorization does not give any new information.
The following result is very useful to study special generic maps
(see \cite{burlet,saeki}).

\begin{thm}
\label{stein}
Let $f:M \to \R^p$  be a special generic map of
a closed connected $n$-dimensional manifold $M$ into $\R^p$, $n>p$.
Then the following holds.
\begin{enumerate}
 \item The quotient space $W_{f}$ has the structure of a smooth $p$-dimensional manifold with non-empty boundary.
 \item The map $q_f:M \to W_{f}$ is a smooth map.
 \item The map $\bar{f}:W_{f} \to \R^p$ is a smooth immersion.
 \item The singular point set $S(f)$ is a closed $(p-1)$-dimensional submanifold of $M$, and
the restriction of $q_{f}$ to $S(f)$ is a diffeomorphism onto $\partial W_{f}$.
 \item The induced map $(q_{f})_{*}:\pi_{1}(M) \to \pi_{1}(W_{f})$ is a group isomorphism.
 \item We have $q_{f}(M \setminus S(f)) = \mathrm{Int}\,W_{f}$ and
       $q_{f}|_{M \setminus S(f)}:M \setminus S(f) \to \mathrm{Int}\,W_{f}$ is a smooth
$S^{n-p}$-bundle over $\mathrm{Int}\,W_{f}$.
\end{enumerate}
\end{thm}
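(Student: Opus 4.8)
The plan is to deduce everything from the local normal form of a definite fold point together with a single global application of Ehresmann's fibration theorem. First I would record the two local models of $f$ and of the Stein factorization. Away from $S(f)$ the map $f$ is a submersion, hence locally the standard projection $\R^n \to \R^p$; the connected component of a fibre through a regular point is then an open $(n-p)$-disc, so $q_f$ is locally an open map onto an open subset of $\R^p$ and the pair $(q_f,\bar f)$ has local model $(\mathrm{pr},\mathrm{id})$. Near a definite fold point $q$, the normal form shows that $S(f)$ is cut out by $x_p = \dots = x_n = 0$, so $S(f)$ is a closed $(p-1)$-dimensional submanifold of $M$ and $f|_{S(f)}$ is a codimension-one immersion; moreover the fibre of $f$ over $(a_1,\dots,a_{p-1},t)$ near $q$ is a round $(n-p)$-sphere for $t>0$, the single point $q$ for $t=0$, and empty for $t<0$. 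Hence near $q$ the map $q_f$ has local model $\R^{p-1}\times\R^{n-p+1} \to \R^{p-1}\times[0,\infty)$, $(x',x'')\mapsto(x',|x''|^2)$, and $\bar f$ has as local model the inclusion $\R^{p-1}\times[0,\infty)\hookrightarrow\R^p$, an immersion of a manifold with boundary. Since a compact connected manifold admits no submersion to the non-compact $\R^p$, we get $S(f)\neq\emptyset$.

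Next I would build the smooth structure on $W_f$ and verify parts (1)--(4). The charts above cover $M$; I would give $W_f$ the sheaf of those functions whose $q_f$-pullback is smooth on $M$. On the regular part this recovers the coordinates coming from $\bar f$, and near a fold point it recovers the coordinates $(x',t)$ on $\R^{p-1}\times[0,\infty)$, since a function $g(x',t)$ pulls back to the smooth function $g(x',|x''|^2)$ on $\R^n$ exactly when $g$ is smooth on the half-space. This makes $W_f$ a smooth $p$-manifold with non-empty boundary $\partial W_f = q_f(S(f))$ for which $q_f$ is smooth and $\bar f$ is a smooth immersion, giving (1)--(3). For (4), the map $q_f|_{S(f)}$ is injective because by the fold model a singular point is isolated in its own fibre component, so distinct singular points cannot share a connected fibre; being an injective immersion between closed $(p-1)$-manifolds it is a diffeomorphism onto $\partial W_f$.

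For (6) I would proceed as follows. Every fibre of $q_f$ is connected by construction, and $q_f$ is a submersion over $\mathrm{Int}\,W_f$. Choose a closed tubular neighbourhood $\bar N$ of $S(f)$ with $q_f(\bar N)$ a closed collar of $\partial W_f$; the fold model exhibits $\bar N$ as a $D^{n-p+1}$-bundle over $S(f)$ and $q_f|_{\bar N\setminus S(f)}$ as an $S^{n-p}$-bundle over the open collar. On the compact manifold-with-boundary $M\setminus\mathrm{Int}\,\bar N$ the restriction of $q_f$ is a proper submersion onto $W_f$ minus the open collar, which is connected because $W_f$ is; Ehresmann's theorem then makes it a fibre bundle whose fibre, being the one adjacent to the collar, is $S^{n-p}$, so every fibre is $S^{n-p}$. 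Gluing along $\partial\bar N$ shows $q_f|_{M\setminus S(f)}$ is a smooth $S^{n-p}$-bundle over $\mathrm{Int}\,W_f$, and surjectivity onto $\mathrm{Int}\,W_f$ follows since no regular point lies over $\partial W_f = q_f(S(f))$. Part (5) is then extracted from (6): decomposing $M = (M\setminus\mathrm{Int}\,\bar N)\cup\bar N$ and $W_f$ into the collar and its complement, both $\bar N$ and the collar deformation retract onto $S(f)\cong\partial W_f$ compatibly with $q_f$; when $n-p\geq 2$ van Kampen and the homotopy exact sequences of the sphere bundles (whose fibres are simply connected) force $(q_f)_*$ to be an isomorphism, and when $n-p=1$ the circle fibre over an interior point bounds a $2$-disc inside the disc bundle $\bar N$, so it dies in $\pi_1(M)$ exactly as it dies in $\pi_1(W_f)$, and van Kampen again yields the isomorphism.

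I expect the most technical point to be endowing $W_f$ with its smooth manifold-with-boundary structure: one must check that $W_f$ is Hausdorff and that the sheaf of $q_f$-pullback-smooth functions makes $W_f$ locally diffeomorphic to $\R^p$ near interior points and to $\R^{p-1}\times[0,\infty)$ near boundary points, the latter resting on the classical description of smooth $O(n-p+1)$-invariant functions on $\R^{n-p+1}$ as smooth functions of the squared norm. Once $W_f$ is known to be a smooth manifold with boundary, the smoothness of $q_f$ and $\bar f$ is immediate from the local models, and parts (5) and (6) follow from Ehresmann's theorem and van Kampen's theorem as indicated above.
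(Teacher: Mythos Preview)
Your proposal is a correct and well-organized reconstruction of the standard proof, but there is essentially nothing to compare it to: the paper does not prove Theorem~\ref{stein}. It is stated as background, with the remark that it was proved by Burlet--de Rham for $(n,p)=(3,2)$ and that the general case follows by the same method. Your argument via the local normal form, Ehresmann's theorem, and van Kampen is exactly the ``similar method'' one would expect.

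A couple of minor comments on the write-up. In part~(4), to conclude that $q_f|_{S(f)}$ is a diffeomorphism onto $\partial W_f$ you should also note surjectivity, which follows because the local model shows that every boundary point of $W_f$ is the image of a singular point. In part~(6), Ehresmann's theorem alone gives local triviality but not surjectivity onto $W_f$ minus the open collar; you implicitly use that a proper submersion has open and closed image, together with connectedness of the target. Finally, your treatment of part~(5) in the case $n-p=1$ is correct but compressed: the cleanest way to phrase it is that the kernel of $\pi_1(M\setminus S(f))\to\pi_1(M)$ is normally generated by meridians of the codimension-two submanifold $S(f)$, and these meridians coincide with the $S^1$-fibres of $q_f$ near $\partial W_f$, which normally generate the kernel of $\pi_1(M\setminus S(f))\to\pi_1(\mathrm{Int}\,W_f)$ by the homotopy exact sequence of the bundle; hence the two quotients agree. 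None of these points is a genuine gap.
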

 
The above theorem is essentially proved for $(n,p)=(3,2)$ in \cite{burlet}.
We can prove it for general $(n,p)$, $n>p$, by using a similar method.

%The following result gives us a method of reconstructing all manifolds
%admitting special generic maps through the Stein factorizations, up to homeomorphism.

We say that a $D^n$-bundle (or an $S^n$-bundle) is \emph{linear} if its structure group can be
reduced to the orthogonal group $\mathit{O}(n)$. 
Saeki \cite{saeki} proved the following theorem about the topology of the source manifolds
of  special generic maps.

\begin{thm}
\label{fiber}
Suppose a closed $n$-dimensional manifold $M$ admits a special generic map into $\R^p$ with $n > p$.
Then there exists a topological $D^{n-p+1}$-bundle $E$ over $W_{f}$ with $M$ being homeomorphic to $\partial E$.
Furthermore, if $n-p \leq 3$, then we can arrange so that $E$ is a linear $D^{n-p+1}$-bundle over $W_f$
and that $M$ is diffeomorphic to $\partial E$.
\end{thm}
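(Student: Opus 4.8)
The plan is to recover $E$ by capping off, fibrewise, the sphere-bundle part of the Stein factorization of $f$, and to keep track of exactly how much smoothness survives this operation. Write $q:=q_f$, $W:=W_f$, $S:=S(f)$ and $m:=n-p$; we may assume $M$ (hence $W$) connected. By Theorem~\ref{stein}, $q|_{S}\colon S\to\partial W$ is a diffeomorphism and $q|_{M\setminus S}\colon M\setminus S\to\mathrm{Int}\,W$ is a smooth $S^{m}$-bundle. First I would fix a tubular neighbourhood $T$ of $S$ in $M$, identify it with the disk bundle $D(\nu_f)$ of the normal bundle $\nu_f$ of $S$ in $M$, and use the local normal form of a definite fold from Section~2 to check that $q(T)$ is a collar $C\cong\partial W\times[0,1)$ of $\partial W$, that $q^{-1}(\partial W\times\{t\})\cap T=S_{t}(\nu_f)$ (the linear radius-$t$ sphere subbundle of $D(\nu_f)$) for $0<t<1$, and that under these identifications $q|_{T}$ is the map $(x,v)\mapsto(x,|v|)$. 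Fixing $0<\varepsilon<1$ and setting $W_{0}:=W\setminus(\partial W\times[0,\varepsilon))\cong W$ and $M_{0}:=q^{-1}(W_{0})$, this gives a smooth $S^{m}$-bundle $M_{0}\to W_{0}$ that is the linear bundle $S_{\varepsilon}(\nu_f)$ over $\partial W_{0}$, together with a decomposition $M\cong D(\nu_f)\cup_{S(\nu_f)}M_{0}$.

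Next, for the first (topological) assertion, I would cone the fibres. A homeomorphism $h$ of $S^{m}$ cones to a homeomorphism $\widehat h$ of $D^{m+1}$ (viewed as the cone on $S^{m}$), and $h\mapsto\widehat h$ is a continuous homomorphism $\mathrm{Homeo}(S^{m})\to\mathrm{Homeo}(D^{m+1})$ which is the identity on $O(m+1)\subset\mathrm{Homeo}(S^{m})$. Applying it to a bundle atlas for $M_{0}\to W_{0}$ whose transition functions are orthogonal near $\partial W_{0}$ (possible, since there $M_{0}$ is the linear bundle $S_{\varepsilon}(\nu_f)$) produces a topological $D^{m+1}$-bundle $E_{0}\to W_{0}$ with fibrewise boundary $M_{0}$ and with $E_{0}|_{\partial W_{0}}=D_{\varepsilon}(\nu_f)$. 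Now assemble $E$ over the decomposition $W=W_{0}\cup(\partial W\times[0,\varepsilon])$ by taking $E_{0}$ over $W_{0}$, the pullback of $D(\nu_f)$ over the collar $\partial W\times[0,\varepsilon]$, and gluing them along $\partial W_{0}$ via the identification just described: this is a topological $D^{n-p+1}$-bundle over $W$, and a direct computation gives $\partial E\cong M_{0}\cup_{S(\nu_f)}D(\nu_f)\cong M$, a homeomorphism.

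For the second assertion, assume $m=n-p\le 3$. Then $\mathrm{Diff}(S^{m})$ deformation retracts onto $O(m+1)$: trivially for $m=0$, classically for $m=1$, by Smale~\cite{smale} for $m=2$, and by Hatcher~\cite{Hatcher} for $m=3$. Hence the structure group of the smooth sphere bundle $M_{0}\to W_{0}$ reduces to $O(m+1)$, and the reduction can be chosen to agree over $\partial W_{0}$ with the already-linear bundle $S_{\varepsilon}(\nu_f)$; equivalently $M_{0}\cong S(\xi)$ smoothly for an $(m+1)$-plane bundle $\xi\to W_{0}$ with $\xi|_{\partial W_{0}}\cong\nu_f$. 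Re-running the previous paragraph with the linear disk bundle $D(\xi)$ in place of $E_{0}$, gluing two linear bundles that agree over the overlap yields a linear $D^{n-p+1}$-bundle $E\to W$; since every identification used is now a diffeomorphism, $M$ is diffeomorphic to $\partial E$.

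The technical heart is the capping-off step. Coning is what always lets one extend a sphere bundle to a disk bundle, but the cone of a diffeomorphism is in general not smooth at the cone point, so the construction stays only in the topological category — which is precisely why the unrestricted conclusion asserts merely a homeomorphism onto the boundary of a topological bundle. To remain in the smooth (indeed linear) category one must reduce $\mathrm{Diff}(S^{n-p})$ to $O(n-p+1)$, and this is available only for $n-p\le 3$, depending for $n-p=2,3$ on the hard theorems of Smale and Hatcher. The remaining ingredients — the normal-form computation identifying $q^{-1}$ of a collar with $D(\nu_f)$, and the collar bookkeeping in the two gluings — are routine.
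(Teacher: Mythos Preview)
The paper does not supply its own proof of this theorem: it is quoted from Saeki~\cite{saeki} and used as a black box, so there is nothing in the paper to compare against line by line. That said, your argument is correct and is essentially the argument one finds in Saeki's work: split $M$ along a collar of $S(f)$ into a disk bundle $D(\nu_f)$ over $\partial W_f$ and a smooth $S^{n-p}$-bundle $M_0$ over $W_0\cong W_f$; cone the $\mathrm{Diff}(S^{n-p})$-cocycle of $M_0$ to a $\mathrm{Homeo}(D^{n-p+1})$-cocycle to obtain a topological disk bundle $E_0$ with $\partial_{\mathrm{fib}}E_0=M_0$; and glue $E_0$ to the pullback of $D(\nu_f)$ over the collar to get $E$ with $\partial E\cong M$. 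The upgrade to a linear bundle and a diffeomorphism when $n-p\le 3$ via the deformation retraction $\mathrm{Diff}(S^{n-p})\simeq O(n-p+1)$ (trivial, classical, Smale~\cite{smale}, Hatcher~\cite{Hatcher} for $n-p=0,1,2,3$) is exactly the right mechanism, and the paper itself relies on the same Smale--Hatcher input elsewhere (see the proof of Theorem~\ref{main} and Remark~\ref{Crowley-isomorphism}).

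Two small remarks, neither of which is a gap. First, in the local model the collar coordinate is $|v|^{2}$ rather than $|v|$, so your identification $q^{-1}(\partial W\times\{t\})=S_t(\nu_f)$ tacitly reparametrises the collar; this is harmless. Second, when you compute $\partial E$ you should make explicit that for a disk bundle over a manifold with boundary one has $\partial E = S(E)\cup_{S(E)|_{\partial W}} E|_{\partial W}$, so that your gluing $M_0\cup_{S(\nu_f)}D(\nu_f)$ really is $\partial E$ after absorbing the collar piece; you effectively do this, but spelling it out would make the ``direct computation'' transparent.
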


By using Theorem~\ref{fiber}, Saeki \cite{saeki,saeki2} classifies the diffeomorphism types of
the simply connected $n$-dimensional manifolds $(n=5,6)$ which admit special generic maps into $\R^3$.
That is, the following result holds.

\begin{thm}
\label{classification}
Let $M$ be a closed simply connected $n$-dimensional manifold with $n=5,6$.
Then $M$ admits a special generic map into $\R^3$ if and only if
$M$ is diffeomorphic to $S^n$ or  to the connected sum of $S^{n-2}$-bundles over $S^2$.
\end{thm}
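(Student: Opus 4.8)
The plan is to prove the two implications separately, in both cases passing through the Stein factorization. The key observation is that for a special generic map $f\colon M\to\R^3$ with $n=5$ or $6$ the inequality $n-3\le 3$ makes the linearity clause of Theorem~\ref{fiber} available, so $M$ is diffeomorphic to $\partial E$ for a \emph{linear} $D^{n-2}$-bundle $E$ over the Stein space $W_f$; and conversely, any compact $3$-manifold $W$ carrying an immersion into $\R^3$ together with a linear $D^{n-2}$-bundle $E\to W$ is the Stein data of a special generic map on $\partial E$ (the standard fold construction; cf.\ \cite{saeki}). So everything reduces to understanding which pairs $(W,E)$ occur and what $\partial E$ is.

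\emph{The ``if'' direction.} The sphere $S^n$ carries a special generic map into $\R^3$, namely the restriction of the orthogonal projection $\R^{n+1}\to\R^3$; its singular set is an equatorial $S^2$ and all singularities are definite folds (in Stein terms, $W_f=D^3$, $E=D^3\times D^{n-2}$, and $\partial E=\partial(D^3\times D^{n-2})=S^n$). For a connected sum of $g\ge 1$ copies of $S^{n-2}$-bundles $\xi_1,\dots,\xi_g$ over $S^2$, I would take $W$ to be $D^3$ with $g$ disjoint open balls removed, which is tautologically embedded in $\R^3$ and is diffeomorphic to the $g$-fold boundary connected sum $\natural^g(S^2\times[0,1])$. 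Over the $i$-th $S^2\times[0,1]$-summand I would install a linear $D^{n-2}$-bundle $E_i$ whose fibrewise double is $\xi_i$; such an $E_i$ exists because for $n=5,6$ every $S^{n-2}$-bundle over $S^2$ is (linear, and is) the fibrewise double of a linear $D^{n-2}$-bundle over $S^2$, the relevant map $\pi_1(SO(n-2))\to\pi_1(SO(n-1))$ being surjective and, by Smale and Hatcher, $\mathrm{Diff}(S^{n-2})$ being homotopy equivalent to $O(n-1)$ for $n-2\le 3$ (cf.\ \cite{saeki,saeki2}). Writing $E=E_1\natural\cdots\natural E_g$ for the resulting boundary-connected-sum bundle over $W$, one obtains
\[
\partial E\;=\;\partial\bigl(E_1\natural\cdots\natural E_g\bigr)\;\cong\;\partial E_1\#\cdots\#\partial E_g\;\cong\;\xi_1\#\cdots\#\xi_g ,
\]
using $\partial(A\natural B)\cong\partial A\#\partial B$ and the fact that the boundary of a $D^{n-2}$-bundle over $S^2\times[0,1]$ is the fibrewise double of its restriction to $S^2$. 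The fold construction on $(W,E)$ then yields the desired special generic map $M\cong\partial E\to\R^3$.

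\emph{The ``only if'' direction.} Let $f\colon M\to\R^3$ be special generic with $M$ closed and simply connected, $n=5$ or $6$. By Theorem~\ref{fiber}, $M\cong\partial E$ for a linear $D^{n-2}$-bundle $E$ over $W_f$, and by Theorem~\ref{stein} the space $W_f$ is a compact connected $3$-manifold with non-empty boundary which immerses in $\R^3$ (hence is orientable and parallelizable) and satisfies $\pi_1(W_f)\cong\pi_1(M)=1$. I would then invoke a standard ``half-lives-half-dies'' argument to conclude that every boundary component of such a $W_f$ is a $2$-sphere; capping these off with $3$-balls produces a closed simply connected $3$-manifold, which is $S^3$ by the Poincar\'{e} conjecture. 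Hence $W_f$ is $S^3$ with $g+1$ open balls removed for some $g\ge 0$, i.e.\ $W_f\cong D^3$ when $g=0$ and $W_f\cong\natural^g(S^2\times[0,1])$ when $g\ge 1$. Since $W_f$ is then homotopy equivalent to a wedge of $g$ copies of $S^2$ and $[\bigvee_g S^2,BO(n-2)]\cong\prod_g\pi_2(BO(n-2))$, the linear bundle $E$ is the boundary connected sum of its restrictions $E_i\to S^2\times[0,1]$ (together with a trivial bundle over the $D^3$-part). Exactly as above, $\partial E\cong S^n$ when $g=0$, while for $g\ge 1$ we get $\partial E\cong\partial E_1\#\cdots\#\partial E_g$ with each $\partial E_i$ the fibrewise double of a $D^{n-2}$-bundle over $S^2$, hence an $S^{n-2}$-bundle over $S^2$. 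Therefore $M$ is diffeomorphic to $S^n$ or to a connected sum of $S^{n-2}$-bundles over $S^2$, as required.

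\emph{Where the difficulty lies.} The crux is the ``only if'' direction, and specifically the identification of the Stein space $W_f$ as a punctured $3$-sphere: this is exactly the point at which the $3$-dimensional Poincar\'{e} conjecture is needed, and where the simple-connectivity of $M$ is essential (for non-simply-connected $M$ one would get handlebody-like Stein spaces and extra $S^1\times S^{n-1}$-type summands). The rest is careful bookkeeping of how the linear disk bundle $E$ and its boundary behave under the boundary-connected-sum decomposition of $W_f$ — verifying $\partial(A\natural B)\cong\partial A\#\partial B$ and identifying the boundary of a disk bundle over $S^2\times[0,1]$ with the fibrewise double over $S^2$ — together with staying within linear bundles on the construction side, which is automatic here since $n-3\le 3$ and is precisely what makes the two lists of manifolds coincide.
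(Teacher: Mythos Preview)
The paper does not give its own proof of Theorem~\ref{classification}; it is quoted from Saeki \cite{saeki,saeki2}. That said, the Remark following Theorem~\ref{main-cor} contains exactly your ``only if'' argument in outline: identify $W_f$ with a punctured $S^3$ using simple connectivity and the Poincar\'e conjecture, write $W_f\cong\natural^b(S^2\times[0,1])$, restrict the linear $D^{n-2}$-bundle $E$ of Theorem~\ref{fiber} to each summand, and conclude $M\cong\partial E\cong\#_i\partial E_i$ with each $\partial E_i$ an $S^{n-2}$-bundle over $S^2$. Your proof is correct and follows this same route (and presumably Saeki's original one), including the half-lives-half-dies step to see that $\partial W_f$ consists of spheres.

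One imprecision in the ``if'' direction: you appeal to $\mathrm{Diff}(S^{n-2})\simeq O(n-1)$ ``for $n-2\le 3$'' to conclude that every $S^{n-2}$-bundle over $S^2$ is linear. That inequality covers only $n\le 5$, and for $n-2=3$ it is Hatcher's theorem, not Smale's; for $n=6$ one would need the homotopy type of $\mathrm{Diff}(S^4)$, which is not known. Fortunately this is not needed. In Saeki's work and in this paper ``$S^{n-2}$-bundle over $S^2$'' means a \emph{linear} sphere bundle, so your surjectivity of $\pi_1(SO(n-2))\to\pi_1(SO(n-1))$ already suffices to realise each such bundle as the fibrewise double of a linear disk bundle, and the fold construction on $(W,E)$ then produces the desired special generic map.
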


\section{Normal bundle of the singular point set}

In this section, we give a necessary condition
for a special generic map $f:M \to \R^p$
of a closed orientable $n$-dimensional manifold $M$ with $n>p$
to be lifted to a codimension two immersion.

\begin{prop}\label{lemmaA}
Let $M$ be a closed orientable manifold of dimension $n$ and
let $F$ be an immersion of $M$ into $\R^{n+2}$ such that
$f = \pi \circ F$ is a special generic map, where
$\pi: \R^{n+2} \to \R^p$ is the standard projection, $n > p \geq 1$.
Then the normal bundle $\nu_f$ of $S(f)$ in $M$ is stably trivial.
Furthermore, if $n>2p-2$, then $\nu_f$ is trivial.
\end{prop}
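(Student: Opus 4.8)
The plan is to analyze the normal bundle $\nu_f$ of $S(f)$ in $M$ by comparing two descriptions of the normal data of $S(f)$ inside Euclidean space — one coming from $M$, the other from the lift $F$. Since $f$ is special generic, near $S(f)$ the map $f$ looks locally like $(x_1,\dots,x_{p-1},x_p^2+\cdots+x_n^2)$, so $S(f)$ is a codimension $n-p+1$ submanifold of $M$ and $f|_{S(f)}$ is a codimension one immersion into $\R^p$. The key local observation is that along $S(f)$ the differential $df$ kills the normal directions to $S(f)$ in $M$; hence $dF$ restricted to $\nu_f$ lands in the fiber of the projection $\pi:\R^{n+2}\to\R^p$, i.e.\ in an $(n+2-p)$-dimensional linear subspace. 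Thus, writing $\nu(S(f),\R^{n+2})$ for the normal bundle of the immersed submanifold $F(S(f))$ in $\R^{n+2}$, I would first establish a bundle splitting
\[
\nu(S(f),\R^{n+2}) \;\cong\; \nu_f \oplus \varepsilon^{1} \oplus \nu_{f|_{S(f)}},
\]
where $\varepsilon^1$ records the fold direction (the normal of $S(f)$ in $M$ contributes $\nu_f$ together with one extra fibered direction coming from the definite-fold model, since the quadratic $x_p^2+\cdots+x_n^2$ has a one-dimensional ``value'' direction) and $\nu_{f|_{S(f)}}$ is the (one-dimensional, hence trivial because $M$ is orientable and everything sits over $\R^p$) normal bundle of the immersion $f|_{S(f)}:S(f)\looparrowright\R^p$ inside $\R^p$. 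The precise bookkeeping of dimensions here is the one routine computation I would carry out carefully.

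Next I would use orientability of $M$ and the standard stable normal bundle identity. Because $F$ is an immersion of the closed orientable $n$-manifold $M$ into $\R^{n+2}$, the normal bundle $\nu_F$ of $M$ is a rank-two oriented bundle, and its restriction to $S(f)$ is stably determined. On the other hand $S(f)$ has trivial normal bundle in $\R^{n+2}$ up to the contributions listed above — more precisely $\nu(S(f),\R^{n+2}) \cong \nu_f \oplus (\text{trivial of the complementary rank})$ follows from combining the splitting with the triviality of $\nu_{f|_{S(f)}}$ and of $\nu_F|_{S(f)}$ in the stable range. Since $TS(f)$ is stably trivial (as $S(f)$ bounds a compact region in $M$ via the Stein factorization picture of Theorem~\ref{stein}, being diffeomorphic to $\partial W_f$, so it is a boundary and hence its tangent bundle is stably trivial when $M$ is parallelizable along $S(f)$ — here I would instead argue directly that $S(f)\hookrightarrow\R^{n+2}$ has stably trivial normal bundle because $S(f)$ embeds in $\R^{n+2}$ with arbitrarily large codimension), we get that $\nu_f$ is stably trivial. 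This gives the first assertion.

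For the second assertion, I invoke the dimension hypothesis $n > 2p-2$, i.e.\ $\dim S(f) = p-1$ and $\operatorname{rank}\nu_f = n-p+1 > p-1 = \dim S(f)$. A stably trivial vector bundle over a CW-complex of dimension strictly less than its rank is trivial — this is the standard consequence of obstruction theory / the fact that $\widetilde{KO}(X)$ stabilizes once the rank exceeds $\dim X$ and that the map from isomorphism classes of rank-$r$ bundles to $\widetilde{KO}$ is injective in that range. Applying this with $X = S(f)$ (a closed $(p-1)$-manifold, hence a CW-complex of dimension $p-1$) and $r = n-p+1 > p-1$ yields that $\nu_f$ is trivial. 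The main obstacle, and the place I would spend the most care, is the first step: getting the bundle splitting of $\nu(S(f),\R^{n+2})$ exactly right by a careful reading of the definite-fold normal form and tracking which directions in $M$ map to which directions in $\R^{n+2}$ under $dF$; once that is correct, the rest is standard bundle theory.
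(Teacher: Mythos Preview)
Your key observation---that along $S(f)$ the differential $df$ annihilates $\nu_f$, so $dF$ carries $\nu_f$ fiberwise injectively into $\ker(d\pi)$---is exactly the crux of the paper's proof, and you should exploit it directly rather than taking the detour through $\nu(S(f),\R^{n+2})$. The paper simply observes that this gives a bundle monomorphism of $\nu_f$ into the trivial $(n{-}p{+}2)$-plane bundle $(F\circ i)^*\ker(d\pi)$; choosing an orthogonal complement $\xi$ yields $\nu_f\oplus\xi\cong\varepsilon^{\,n-p+2}$ with $\xi$ a line bundle. Then one checks that $\nu_f$ is orientable (since $M$ is orientable and $S(f)\cong\partial W_f$ is orientable via the immersion $\bar f$), so $\xi$ is orientable, hence trivial, and $\nu_f$ is stably trivial. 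Your final paragraph, invoking $\operatorname{rank}\nu_f=n-p+1>p-1=\dim S(f)$ to pass from stable triviality to triviality, is correct and is exactly what the paper does.

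Your longer route has genuine gaps. The splitting $\nu(S(f),\R^{n+2})\cong\nu_f\oplus\varepsilon^1\oplus\nu_{f|_{S(f)}}$ is not justified: the decomposition $\nu(S(f),\R^{n+2})\cong\nu_f\oplus\nu_F|_{S(f)}$ is what you get for free, and $\nu_F|_{S(f)}$ is a rank-two oriented bundle whose triviality is not obvious. More seriously, your two proposed reasons for the stable triviality of $TS(f)$ (equivalently, of its normal bundle in $\R^{n+2}$) are both wrong as stated: a manifold being a boundary does \emph{not} imply its tangent bundle is stably trivial, and embedding in Euclidean space of high codimension does not make the normal bundle stably trivial---the stable normal bundle is the stable inverse of $TS(f)$ regardless of codimension. (It \emph{is} true that $TS(f)$ is stably trivial here, but for a reason you don't give: $S(f)\cong\partial W_f$ and $W_f$ immerses in $\R^p$, so $TW_f$ is trivial and hence $TS(f)\oplus\varepsilon^1$ is trivial.) All of this machinery is unnecessary once you use your own first observation as the paper does.
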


Here, a vector  bundle $E$ over a topological space $B$ is said to be \emph{stably trivial} if
the Whitney sum of $E$ and a finite dimensional trivial vector bundle over $B$
is trivial as a vector bundle.

\begin{proof}[Proof of Proposition \ref{lemmaA}]

Let $i^{*}(TM)$ be the pullback of $TM$ induced by the inclusion map $i:S(f) \to M$ and
let $\tilde{i}:i^{*}(TM) \to TM$ be the natural map over $i$.
Then $\nu_f$ is identified with $\mathrm{ker}(df \circ \tilde{i})$, which
is an $(n-p+1)$-plane subbundle of $i^{*}(TM)$.

On the other hand, since $\bar{f}:W_f \to \R^p$ is an immersion of a $p$-dimensional manifold
and $q_f|_{S(f)}:S(f) \to \partial W_f$ is a diffeomorphism, we have that $S(f)$ is orientable.
Therefore, since $M$ is orientable and $i^{*}(TM) \cong TS(f) \oplus \nu_f$,
we have that $\nu_f$ is orientable.

Let $G$ be the restriction of $dF \circ \tilde{i}$ to $\nu_f$.
Since $f=\pi \circ F$ and $F$ is an immersion,
$G$ is a fiberwise monomorphism of
the $(n-p+1)$-plane bundle $\nu_f$ into the $(n-p+2)$-plane bundle $\mathrm{ker}(d\pi)$,
which is trivial.
Note that $G$ is a bundle morphism over $F \circ i$.
Therefore, $\nu_f$ is a subbundle of $\zeta=(F \circ i)^{*}(\mathrm{ker}(d\pi))$.
By using an inner product on $\zeta$, we find a $1$-dimensional subbundle $\xi$ of $\zeta$
such that $\nu_f \oplus \xi \cong \zeta$.
Since $\nu_f$ is an orientable $(n-p+1)$-plane bundle and
$\zeta$ is the trivial $(n-p+2)$-plane bundle, we have that
$\xi$ is an orientable line bundle and hence is trivial.
This means that $\nu_f$ is stably trivial.

Furthermore, if $n>2p-2$, i.e. if $n-p+1 > p-1$, then 
the dimension of the base space of $\nu_f$ is strictly less than
the dimension of the fibers of $\nu_f$.
Therefore, the stable triviality of $\nu_f$ implies the triviality of $\nu_f$.
This completes the proof.
\end{proof}

\section{Lifting problems for special generic maps}

In this section, we consider the problem of lifting special generic maps into $\R^3$
to codimension one embeddings.
Recall that if $n=5$ or $6$,
a closed simply connected $n$-dimensional manifold $M$
admits a special generic map $f: M \to \R^3$
if and only if $M$ is diffeomorphic to $S^n$ or to the connected sum of
$S^{n-2}$-bundles over $S^2$ (see Theorem~\ref{classification}).
Now we prove the following theorem.

\begin{thm}\label{main}
Let $f:M \to \R^3$ be a special generic map of a closed simply connected $n$-dimensional manifold,
$n = 5$ or $6$, such that the singular point set $S(f)$ of $f$ has trivial normal bundle in $M$.
Then there exists an embedding $F:M \to W_f  \times \R^{n-2}$ such that $P \circ F = q_f$,
where $P:W_f \times \R^{n-2} \to W_f$ is the projection to the first factor.
\end{thm}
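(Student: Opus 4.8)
The plan is to describe $q_f\colon M\to W_f$ via its Stein factorization, use the hypotheses to show that the $S^{n-3}$-bundle part of $q_f$ is a \emph{trivial} bundle, and then construct the lift $F$ by hand. By Theorem~\ref{stein} the space $W_f$ is a connected compact $3$-manifold with non-empty boundary, $q_f$ restricts to a diffeomorphism $S(f)\to\partial W_f$ and to a smooth $S^{n-3}$-bundle over $\mathrm{Int}\,W_f$, and $W_f$ is simply connected by Theorem~\ref{stein}(5). Such a $W_f$ has the homotopy type of a $2$-dimensional CW complex; from the long exact sequence of $(W_f,\partial W_f)$ and the Lefschetz duality isomorphisms $H_2(W_f,\partial W_f;A)\cong H^1(W_f;A)=0$ (with $A=\mathbb{Z}$ and $A=\mathbb{Z}_2$), one reads off that every component of $\partial W_f$ is a $2$-sphere and that $\partial W_f\hookrightarrow W_f$ is surjective on $H_2(\,\cdot\,;\mathbb{Z}_2)$. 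I will fix a collar $c\colon\partial W_f\times[0,3)\hookrightarrow W_f$ and use the fact that near $S(f)$ the map $q_f$ is modelled, in the fold coordinates of Section~2, on $(s,v)\mapsto c(s,|v|^2)$ on the normal disk bundle $D(\nu_f)\to S(f)$; since $\nu_f$ is trivial by hypothesis, this neighbourhood is identified with $\partial W_f\times D^{n-2}$.

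Next I would linearize and trivialize the sphere bundle. Let $W_f'\subset W_f$ be the complement of a slightly smaller open collar, so that $W_f'\cong W_f$ with $\partial W_f'\subset\mathrm{Int}\,W_f$, and set $N:=q_f^{-1}(W_f')$, a smooth $S^{n-3}$-bundle over $W_f'$ carrying on a collar of $\partial W_f'$ the ``radial'' trivialization coming from the model above. Because $\mathrm{Diff}(S^{n-3})$ deformation retracts onto $O(n-2)$ --- Smale for $n=5$, Hatcher for $n=6$ --- this bundle is the unit sphere bundle $S(V)$ of an oriented rank-$(n-2)$ vector bundle $V\to W_f'$, with the radial trivialization being linear. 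Over $\partial W_f'$, a disjoint union of $2$-spheres, $S(V)$ is a trivial bundle, and (for $n-2\ge 3$, again by $\mathrm{Diff}(S^{n-3})\simeq O(n-2)$) a trivial $S^{n-3}$-bundle over $S^2$ is the sphere bundle of a trivial vector bundle; hence $w_2(V)$ restricts to $0$ on $\partial W_f'$, and so $w_2(V)=0$ by the surjectivity noted above. Oriented vector bundles of rank $3$ and $4$ over the $2$-complex $W_f'$ are classified by $w_2$, so $V$ is trivial. Since moreover $\pi_2(SO(n-2))=0$, all oriented trivializations of $V|_{\partial W_f'}$ are homotopic, and a standard obstruction argument deforms a global trivialization of $V$ so that it agrees on the collar of $\partial W_f'$ with the radial one; fixing such a trivialization gives $N\cong W_f'\times S^{n-3}$, linearly and compatibly with the radial coordinates near $\partial W_f'$.

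Then I would assemble $F\colon M\to W_f\times\mathbb{R}^{n-2}$ from two pieces. Over $W_f'$, using $N\cong W_f'\times S^{n-3}$, set $F(w,u)=(w,u)\in W_f'\times\mathbb{R}^{n-2}$; this is an embedding with $P\circ F=q_f$. Over the collar, identify $q_f^{-1}\bigl(c(\partial W_f\times[0,1+\delta))\bigr)$ with $\partial W_f\times D^{n-2}$ so that $q_f(s,v)=c(s,|v|^2)$, and set $F(s,v)=\bigl(c(s,|v|^2),\,\beta(|v|)\,v/|v|\bigr)$ for $v\ne 0$ and $F(s,0)=(s,0)$, where $\beta$ is a smooth function on the relevant interval with $\beta(r)=r$ near $r=0$ and $\beta\equiv 1$ near the inner end of the collar. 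This $F$ is smooth at $v=0$, where it equals $(c(s,|v|^2),v)$; it agrees with the first piece on the overlap, where $\beta\equiv 1$; it is injective, because the first coordinate already separates distinct radii and distinct boundary components while the last coordinate separates the points of each sphere; and it is an immersion. As $M$ is closed, $F$ is an embedding, and $P\circ F=q_f$ by construction.

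I expect the heart of the argument to be the second step. Once Smale's and Hatcher's theorems put the $S^{n-3}$-bundle into linear form, its triviality is a soft consequence of $W_f$ being a simply connected $3$-manifold, but this is precisely where all three hypotheses enter: $n\in\{5,6\}$ through $\mathrm{Diff}(S^{n-3})\simeq O(n-2)$ and the low-rank classification of bundles, the simple connectivity of $M$ through the structure of $W_f$ (in particular the surjectivity $H_2(\partial W_f;\mathbb{Z}_2)\to H_2(W_f;\mathbb{Z}_2)$), and the triviality of $\nu_f$ both to obtain $w_2(V)|_{\partial W_f'}=0$ and to split $M$ near $S(f)$. The one delicate point in the last step is matching the hand-made embedding near $S(f)$ with the linear trivialization on the overlapping collar, which succeeds because in the fold normal direction $q_f$ is the quadratic map $v\mapsto|v|^2$, allowing the interpolation $\beta$ to be inserted without spoiling smoothness at $S(f)$.
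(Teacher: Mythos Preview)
Your argument is correct, and it takes a genuinely different route from the paper's.

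The paper works handle by handle: using the Poincar\'e conjecture it writes $W_f$ as a collar of $\partial W_f$ together with $1$-handles and a single $3$-handle, builds the lift on the collar from the triviality of $\nu_f$, and then extends it over each handle by quoting that $\mathrm{Diff}_+(S^{n-3})\simeq SO(n-2)$ is $2$-connected, so the relevant families of fiberwise diffeomorphisms can be filled in. Your proof instead first linearizes the $S^{n-3}$-bundle globally (again via Smale/Hatcher), identifies it with the unit sphere bundle of an oriented rank-$(n-2)$ vector bundle $V$ over the $2$-complex $W_f'$, and kills $V$ with a characteristic-class argument: $w_2(V)$ restricts to zero on $\partial W_f'$ and the restriction map $H^2(W_f';\Z_2)\to H^2(\partial W_f';\Z_2)$ is injective by Lefschetz duality and $\pi_1(W_f)=0$. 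You then write $F$ by an explicit formula, matching the two pieces via $\pi_2(SO(n-2))=0$.

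Both proofs rest on Smale/Hatcher; what differs is how the $2$-connectivity is cashed in. Your approach is a bit more structural (a global trivialization rather than a sequence of local extensions), and it avoids invoking the Poincar\'e conjecture: you only need the homological consequences of $\pi_1(W_f)=0$. On the other hand, the paper's handle-by-handle method is what generalizes cleanly to its later Theorem~\ref{main2}, where one lifts into $W_f\times\R^{k-3}$ using the $2$-connectedness of $\mathrm{Emb}(S^{n-3},\R^{k-3})$; there is no ``linear model'' available in that setting, so your characteristic-class step has no direct analogue there.
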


We use the following terminologies in the proof of Theorem~\ref{main}.

\begin{dfn}
Let $X$, $Y$ and $Z$ be smooth manifolds and let $f:X \to Y$, $F:X \to Z$ and $P:Z \to Y$ be smooth maps.
We say that $F$ is a \emph{lift} of $f$ with respect to $P$ if $f=P \circ F$.
In this case, we also say that \emph{$f$} is lifted to $F$ with respect to $P$.
\end{dfn}

\begin{proof}[Proof of Theorem \ref{main}]
We may assume that $M$ is connected.
Then, by Theorem~\ref{stein}, the quotient space $W_f$ has the structure of
a smooth compact simply connected $3$-dimensional manifold with non-empty boundary,
since $M$ is simply connected.
By the solution to the Poincar\'{e} conjecture, this implies that
$W_f$ is diffeomorphic to the $3$-manifold obtained by removing the interior of the union of mutually disjoint
finitely many $3$-balls from the $3$-sphere.

So we have a handle decomposition of $W_f$ as follows:
\[ W_f = (\partial W_f \times [0,1]) \cup \Biggl(\bigcup_{i=1}^{s} h_{i}^1\Biggr) \cup h^3, \]
where $h_i^1$, $i=1,2,\ldots,s$, are $1$-handles and $h^3$ is a $3$-handle.
Let $C$ $(= \partial W_f \times [0,1])$ be the collar neighborhood of $\partial W_f$ in $W_f$,
where $\partial W_f$ corresponds to $\partial W_f \times \{ 0 \}$.

Fix orientations of $M$ and $\R^3$.
Since $\bar{f}:W_f \to \R^3$ is an immersion,
we can orient $W_f$ in such a way that $\bar{f}$ is orientation preserving.
Then, for each $w \in \mathrm{Int}\, W_f$,
we can orient $q_f^{-1}(w) (\cong S^{n-3})$ in such a way that
if $U$ is a small open neighborhood of $w$ in $\mathrm{Int}\, W_f$, and
$\phi:q_{f}^{-1}(U) \to U \times q_{f}^{-1}(w)$ is a local trivialization
with $\phi(x)=(w,x)$ for every $x \in q_{f}^{-1}(w)$, then
$\phi$ is orientation preserving, where the orientations of
$q_{f}^{-1}(U)$ and $U$ are induced from those of $M$ and $W_f$, respectively.

By the assumption that $S(f)$ has trivial normal bundle in $M$,
the composition of the restriction $q_{f}|_{q_{f}^{-1}(C)} : q_{f}^{-1}(C) \to C$ with the natural projection 
\[ p_C:C(= \partial W_f \times [0,1]) \to \partial W_f \]
is a trivial $D^{n-2}$-bundle.
Therefore, we have a bundle trivialization
\[ H_{C}:q_{f}^{-1}(C) \to \partial W_f \times D^{n-2}. \]
We fix an orientation of $\R^{n-2}$, which induces an orientation for $D^{n-2}$.
Then, it induces an orientation for $S^{n-3} = \partial D^{n-2}$.
We may assume that the restriction of $H_C$ to $q_f^{-1}(w)$ is
an orientation preserving diffeomorphism onto
$\{w'\} \times S^{n-3}$ for every $w \in \partial W_f \times \{1 \}$, where
$w'$ is the point in $\partial W_f$ such that $w=(w',1)$.
Then the map 
\[ e_1:q_{f}^{-1}(C) \to C \times \mathbb{R}^{n-2}\]
defined by $e_1(x)=(q_f(x), pr_2 \circ H_C(x))$, $x \in q_f^{-1}(C)$, 
is a smooth map, where the map
\[ pr_2: \partial W_f \times D^{n-2} \to D^{n-2} \subset \R^{n-2}\]
is the projection to the second factor.

Note that $e_1$ is an embedding lift of $q_{f}|_{q_{f}^{-1}(C)}$
with respect to the restriction of $P$ to $C \times \R^{n-2}$.
This is proved as follows.
It is clear that $e_1$ is a lift of $q_{f}|_{q_{f}^{-1}(C)}$
with respect to $P|_{C \times \R^{n-2}}$ by the construction of $e_1$.
Therefore, we have only to prove that $e_1$ is an embedding.
Note that the following diagram commutes:
\[
\xymatrix{
q_f^{-1}(C) \ar[r]^-{q_f} \ar[dr]_-{H_C} & C (= \partial W_f \times [0,1]) \ar[r]^-{p_C} & \partial W_f \\
& \partial W_f \times D^{n-2} \ar[ur]_-{pr_1}, \\
}
\]
where $pr_1$ is the projection to the first factor.
Therefore, the composition
\[ e_1 \circ H_{C}^{-1}:\partial W_f \times D^{n-2} \to C \times \R^{n-2} = \partial W_f \times [0,1] \times \R^{n-2} \]
maps $(x,y)$ to $(x,K(x,y),y)$ for every $x \in \partial W_f$ and $y \in D^{n-2}$, where
$K$ is a smooth map of $\partial W_f \times D^{n-2}$ into $[0,1]$.
This implies that $e_1$ is an embedding.

We will extend $e_1$ to an embedding lift of
the restriction of $q_f$ to $q_f^{-1}(C \cup h_1^1)$ with respect to the restriction
\[ P|_{(C \cup h_1^1) \times \R^{n-2}}:(C \cup h_1^1) \times \R^{n-2} \to C \cup h_1^1.\]
Note that $h_{1}^1$ is identified with $D^2 \times D^1$ and
is attached to $C$ along $D^2 \times S^0$.

Let $\mathrm{Diff}_{+}(S^{n-3})$ be the space of orientation preserving diffeomorphisms of $S^{n-3}$.
By the results of Smale \cite{smale} and Hatcher \cite{Hatcher},
$\mathrm{Diff}_{+}(S^{n-3})$ is homotopy equivalent to $\mathit{SO}(n-2)$, which is connected.

Since the $1$-handle $h_1^1$ is contractible, we have a bundle trivialization
\[ H_{1,1}:q_f^{-1}(h_1^1) \to h_1^1 \times S^{n-3} \]
which induces an orientation preserving diffeomorphism of $q_f^{-1}(w)$ onto
$\{w\} \times S^{n-3}$ for every $w \in h_1^1$.
We have the two end points $(0, \pm 1)$ of the core of the $1$-handle $h_1^1=D^2 \times D^1$.
Then, we define the orientation preserving diffeomorphism
$\phi_{\pm}:S^{n-3} \to S^{n-3}$ as the composition
\[ S^{n-3} = \{(0,\pm 1)\} \times S^{n-3} \to q_f^{-1}(\{(0,\pm 1)\}) \to \{(0,\pm 1)\} \times S^{n-3} = S^{n-3}, \]
where the first map is $H_{1,1}^{-1}$ restricted to $\{(0,\pm 1)\} \times S^{n-3}$,
the second map is $e_1$ restricted to $q_f^{-1}(\{(0,\pm 1)\})$,
and the double-sign corresponds in the same order.

Since $\mathrm{Diff}_{+}(S^{n-3})$ is connected,
there is a continuous path between $\phi_{-}$ and $\phi_{+}$ in $\mathrm{Diff}_{+}(S^{n-3})$.
This induces a homeomorphism
\[ q_f^{-1}(\{0\} \times D^1) (= (\{0\} \times D^1) \times S^{n-3}) \to (\{0\} \times D^1) \times S^{n-3}, \]
which is an orientation preserving diffeomorphism on each $S^{n-3}$-fiber.
This coincides with 
\[ e_1|_{q_f^{-1}(D^2 \times \{-1,+1\})}:q_f^{-1}(D^2 \times \{-1,+1\}) \to (D^2 \times \{-1,+1\}) \times S^{n-3} \]
over $q_f^{-1}(\{0\} \times \{-1,+1\})$.
Therefore, by gluing the two maps, we have a homeomorphism
\[ q_f^{-1}(X) (=X \times S^{n-3}) \to X \times S^{n-3}, \]
where $X=(D^2 \times \{-1,+1\}) \cup (\{0\} \times D^1)$.
By composing this with the natural projection, we have a continuous map
\[ \phi_1:q_f^{-1}(X) (=X \times S^{n-3}) \to S^{n-3}. \]
Let $\phi_2:h_1^1 \times S^{n-3} \to S^{n-3}$ be the continuous map
defined by $\phi_2(x,y)=\phi_1(r(x),y)$ for $x \in h_1^1$ and $y \in S^{n-3}$, where
$r:h_1^1 \to  X $ is a deformation retract.
Then a smooth approximation $\phi_3:h_1^1 \times S^{n-3} \to S^{n-3}$ of $\phi_2$
such that $\phi_3|_{D^2 \times \{-1, +1\} \times S^{n-3}} = \phi_2|_{D^2 \times \{-1, +1\} \times S^{n-3}}$
induces a diffeomorphism of $\{x\} \times S^{n-3}$ to $S^{n-3}$ for every $x \in h_1^1$, since so does $\phi_2$.
Consequently, we have a smooth homeomorphism
\[ \phi_4:h_1^1 \times S^{n-3} \to h_1^1 \times S^{n-3} \]
given by $\phi_4(x,y)=(x,\phi_3(x,y))$, $(x,y) \in h_1^1 \times S^{n-3}$.
Since the derivative of $\phi_4$ at each point is a linear isomorphism,
by the inverse function theorem, we have that $\phi_4$ is a diffeomorphism.
Then the composition
\[ e_2:q_f^{-1}(h_1^1) (=h_1^1 \times S^{n-3}) \to h_1^1 \times S^{n-3} \to h_1^1 \times \R^{n-2}\]
is an embedding lift of $q_f|_{q_f^{-1}(h_1^1)}$ with respect to the restriction of $P$ to $h_1^1 \times \R^{n-2}$, where
the first map is $\phi_4$ and the second map is the product of the identity map of $h_1^1$ and the standard inclusion.
Since $e_1$ and $e_2$ coincide on the intersection of their sources,
by glueing the two maps $e_1$ and $e_2$, we have an embedding lift of
$q_f|_{q_{f}^{-1}(C \cup h_1^1)}$ with respect to $P$ restricted to $(C \cup h_1^1) \times \R^{n-2}$.

By iterating this procedure, we construct an embedding lift $e_3$ of
the restriction of $q_f$ to $q_{f}^{-1}(C \cup (\bigcup_{i=1}^{s} h_i^1))$
with respect to $P$ restricted $(C \cup (\bigcup_{i=1}^{s} h_i^1)) \times \R^{n-2}$.

Now, let us extend the lift $e_3$ to the whole $W_f$.
Since the $3$-handle $h^3$ is contractible, we have a bundle trivialization
\[ H_{3}:q_f^{-1}(h^3) \to h^3 \times S^{n-3}\]
which induces an orientation preserving diffeomorphism of
$q_f^{-1}(w)$ onto $\{w\} \times S^{n-3}$ for every $w \in h^3$.

We define the continuous map $\rho_1:\partial h^3 \times S^{n-3} \to S^{n-3}$ by the composition
\[ \partial h^3 \times S^{n-3} \to q_f^{-1}(\partial h^3) \to \partial h^3 \times S^{n-3} \to S^{n-3}, \]
where the first map is the restriction of $H_{3}^{-1}$ to $\partial h^3 \times S^{n-3}$,
the second map is the restriction of $e_3$ to $q_f^{-1}(\partial h^3)$,
and the last map is the projection to the second factor.
Note that $\rho_1$ induces an orientation preserving diffeomorphism of
$\{w\} \times S^{n-3}$ onto $S^{n-3}$ for every $w \in \partial  h^3$.

Recall that $\mathrm{Diff}_{+}(S^{n-3})$ is homotopy equivalent to $\mathit{SO}(n-2)$ (see \cite{Hatcher,smale});
hence, it is $2$-connected.
Therefore, the continuous map $\rho_1$ extends to
a continuous map $\rho_2:h^3 \times S^{n-3} \to S^{n-3}$ which induces
an orientation preserving diffeomorphism of $\{w\} \times S^{n-3}$ onto $S^{n-3}$ for every $w \in h^3$.
Then a smooth approximation $\rho_3:h^3 \times S^{n-3} \to S^{n-3}$ of $\rho_2$
such that $\rho_3|_{\partial h^3 \times S^{n-3}} = \rho_2|_{\partial h^3 \times S^{n-3}}$ induces
a diffeomorphism of $\{w\} \times S^{n-3}$ onto $S^{n-3}$ for every $w \in h^3$. 
So we have a smooth homeomorphism
\[ \rho_4:h^3 \times S^{n-3} \to h^3 \times S^{n-3} \]
given by $\rho_4(x,y)=(x,\rho_3(x,y))$ for $x \in h^3$ and $y \in S^{n-3}$.

Since the derivative of $\rho_4$ at each point is a linear isomorphism,
by the inverse function theorem, we have that $\rho_4$ is a diffeomorphism.
Then the composition
\[ e_4:q_f^{-1}(h^3) (=h^3 \times S^{n-3}) \to h^3 \times S^{n-3} \to h^3 \times \R^{n-2}\]
is an embedding lift of $q_f|_{q_f^{-1}(h^3)}$ with respect to the restriction of $P$ to $h^3 \times \R^{n-2}$, where
the first map is $\rho_4$ and the second map is the product of the identity map of $h^3$ and the standard inclusion.
Since $e_3$ and $e_4$ coincide on the intersection of their sources,
by gluing the two maps $e_3$ and $e_4$, we have an embedding lift of $q_f$ with respect to $P$.
This completes the proof.
\end{proof}

\begin{rem}\label{Crowley-isomorphism}
Note that the key ingredient in the proof of Theorem~\ref{main} is that
$\pi_2 \mathrm{Diff}_{+}(S^{n-3}) = 0$ for $n=5, 6$.
On the other hand, Crowley--Schick \cite{crowley} proved that for every $j \geq 1$, we have
\[ \pi_2 \mathrm{Diff}(D^{8j-1}, \partial) \neq 0, \]
where $\mathrm{Diff}(D^{8j-1}, \partial)$ is the space of diffeomorphisms of $D^{8j-1}$
which are the identity on some neighborhood of $\partial D^{8j-1}$.
It is known (see Proposition~4 of Appendix in \cite{Cerf}, for example) that the following homotopy equivalence holds: 
\[ \mathrm{Diff}_{+}(S^n) \simeq \mathrm{Diff}(D^n, \partial) \times \mathit{SO}(n+1). \]
Therefore, the result mentioned above implies that
\[ \pi_2(\mathrm{Diff}_{+}(S^{8j-1})) \neq 0, \]
for every $j \geq 1$.
This means that the method in the proof of Theorem~\ref{main}
does not work in higher dimensions in general.
\end{rem}

As a consequence of Theorem~\ref{main}, we have the following result.

\begin{thm}\label{main-cor}
Let $f:M \to \R^3$ be a special generic map of
a closed simply connected $n$-dimensional manifold with $n=5,6$ and
let $\pi:\R^{n+1} \to \R^3$ be the standard projection.
Then the following conditions are all equivalent to each other:
\begin{enumerate}
\item There exists an embedding $F_1:M \to \R^{n+1}$ such that $\pi \circ F_1 = f$.
\item There exists an immersion $F_2:M \to \R^{n+1}$ such that $\pi \circ F_2 = f$.
\item The singular point set $S(f)$ of $f$ has a trivial normal bundle in $M$.
\item The manifold $M$ is spin.
\end{enumerate}
\end{thm}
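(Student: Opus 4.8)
I would prove the cycle of implications $(1)\Rightarrow(2)\Rightarrow(3)\Rightarrow(1)$ together with the equivalence $(2)\Leftrightarrow(4)$; these give all the asserted equivalences. Three of them are immediate or already available. The implication $(1)\Rightarrow(2)$ holds because every embedding is an immersion. For $(2)\Rightarrow(3)$ I would regard an immersion lift $F_2:M\to\R^{n+1}$ as an immersion into $\R^{n+2}$ and apply Proposition~\ref{lemmaA} with $p=3$ (note $M$ is orientable, being simply connected): it gives that $\nu_f$ is stably trivial, and since $n\in\{5,6\}$ satisfies $n>2p-2=4$, the ``furthermore'' clause of that proposition upgrades this to triviality of $\nu_f$. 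Finally, $(2)\Leftrightarrow(4)$ is exactly the theorem of Saeki and Takase \cite{ST} recalled in the introduction for the pairs $(n,p)=(5,3)$ and $(6,3)$: a special generic map of a closed orientable $n$-manifold into $\R^3$ admits an immersion lift into $\R^{n+1}$ if and only if $M$ is spin. So the only implication requiring genuinely new work is $(3)\Rightarrow(1)$.

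To prove $(3)\Rightarrow(1)$, assume $\nu_f$ is trivial and let $q_f:M\to W_f$, $\bar f:W_f\to\R^3$ be the Stein factorization of $f$, so $f=\bar f\circ q_f$ (Theorem~\ref{stein}). By Theorem~\ref{main} there is an embedding $F:M\to W_f\times\R^{n-2}$ with $P\circ F=q_f$; rescaling the $\R^{n-2}$-coordinate I may assume $F(M)\subset W_f\times D^{n-2}$, where $D^{n-2}$ is the $\varepsilon$-disc about the origin and $\varepsilon$ is to be chosen small below. It then remains to embed the ``vertical thickening'' $W_f\times\R^{n-2}$ into $\R^{n+1}$ compatibly with $\pi$ and with $\bar f$ — the subtlety being that $\bar f$ need not itself be an embedding. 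Since $M$ is simply connected, $W_f$ is a compact orientable simply connected $3$-manifold with non-empty boundary (Theorem~\ref{stein}), hence diffeomorphic to $S^3$ with finitely many disjoint open $3$-balls removed; in particular $W_f$ is parallelizable, is homotopy equivalent to a wedge of $2$-spheres, and admits an embedding $j:W_f\hookrightarrow\R^3$. Using the splitting $\R^{n+1}=\R^3\times\R^{n-2}$ with $n-2\geq3$, I would set $\tilde j:W_f\to\R^{n+1}$, $\tilde j(w)=\bigl(\bar f(w),\,(j(w),0,\dots,0)\bigr)$. Because $\bar f$ is an immersion $\tilde j$ is an immersion, and because $j$ is injective $\tilde j$ is injective, so $\tilde j$ is an embedding with $\pi\circ\tilde j=\bar f$.

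Next I would thicken $\tilde j$ along the $\pi$-vertical directions. The subspace $\ker d\pi=\{0\}\times\R^{n-2}\subset\R^{n+1}$ has dimension $n-2$ and meets $d\tilde j(T_wW_f)$ only in $0$ (again because $\bar f$ is an immersion), so by a dimension count it is everywhere a complement to $T\tilde j(W_f)$. The normal bundle $\nu$ of $\tilde j(W_f)$ in $\R^{n+1}$ has rank $n-2$ and satisfies $TW_f\oplus\nu\cong\underline{\R}^{n+1}$; since $TW_f$ is trivial, $\nu$ is stably trivial, and since $W_f$ is homotopy equivalent to a wedge of $2$-spheres and $n-2\geq3$, a stably trivial rank-$(n-2)$ bundle over $W_f$ is trivial. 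Hence $\hat j:W_f\times\R^{n-2}\to\R^{n+1}$, $\hat j(w,v)=\tilde j(w)+(0,v)$, is an immersion with $\pi\circ\hat j=\bar f\circ P$, and a compactness argument — using that $\bar f$ is injective on sufficiently small balls of $W_f$ and that $j$ separates points of $W_f$ uniformly — shows that for $\varepsilon$ small enough the restriction of $\hat j$ to $W_f\times D^{n-2}$ is injective, hence an embedding. Then $F_1:=\hat j\circ F:M\to\R^{n+1}$ is an embedding with $\pi\circ F_1=\bar f\circ P\circ F=\bar f\circ q_f=f$, the desired lift. This completes $(3)\Rightarrow(1)$ and hence the proof.

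The step I expect to require the most care is $(3)\Rightarrow(1)$, and within it the essential geometric input is the construction of $\tilde j$ and $\hat j$: one must embed $W_f$ into $\R^{n+1}$ so that the composite with $\pi$ is exactly $\bar f$ and so that the normal bundle admits a $\pi$-vertical trivialization. This is where the hypotheses enter unavoidably — the extra dimensions of $\R^{n+1}$, so that $W_f$ embeds ``over $\bar f$'' even though $\bar f$ is only an immersion, and the special topology of $W_f$ as an orientable $3$-manifold with boundary homotopy equivalent to a wedge of $2$-spheres, so that parallelizability of $W_f$ and the stable range force $\nu$ to be trivial. Everything else is either formal or is supplied by Theorem~\ref{main}, Proposition~\ref{lemmaA}, and the cited results of Saeki and Takase.
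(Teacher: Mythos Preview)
Your proposal is correct and follows essentially the same route as the paper's own proof: the implications $(1)\Rightarrow(2)$, $(2)\Rightarrow(3)$ via Proposition~\ref{lemmaA}, and $(2)\Leftrightarrow(4)$ via Saeki--Takase are handled identically, and for $(3)\Rightarrow(1)$ both arguments invoke Theorem~\ref{main} and then promote the lift $M\to W_f\times\R^{n-2}$ to an embedding into $\R^{n+1}$ by first embedding $W_f$ itself over $\bar f$ (your $\tilde j$ is exactly the paper's $G=(\bar f,\bar e)$) and then thickening in the $\pi$-vertical directions (your explicit $\hat j$ plays the role of the paper's $H$). Your version is in fact somewhat more detailed at this last step, where the paper simply asserts the existence of $H$ by compactness; the paragraph you include about triviality of the normal bundle of $\tilde j(W_f)$ is harmless but not actually needed, since the transversality of $\ker d\pi$ to $d\tilde j(TW_f)$ already makes your explicit $\hat j$ an immersion.
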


\begin{proof}
Assume that there exists an immersion $F:M \to \R^{n+1}$ such that $\pi \circ F = f$.
Then the map $F':M \to \R^{n+1} \times \R$ defined by $F'(x)=(F(x),0)$, $x \in M$, is also an immersion lift of $f$.
Therefore, by Proposition \ref{lemmaA}, we conclude that the singular point set $S(f)$ of $f$ has a trivial normal bundle in $M$.

Now, suppose that $S(f)$ has a trivial normal bundle in $M$.
Then by Theorem~\ref{main}, we get an embedding lift $e:M \to W_f \times \R^{n-2}$ of $q_f$.
Since $W_f$ can be embedded into $\R^{n-2}$, by using
an embedding $\bar{e}:W_f \to \R^{n-2}$ and the immersion $\bar{f}:W_f \to \R^3$,
we get an embedding $G=(\bar{f},\bar{e}):W_f \to \R^{n+1}$ such that
$G$ is an embedding lift of $\bar{f}$ with respect to the natural projection $\R^{n+1} \to \R^3$.
Since $W_f$ is compact, there exists an embedding $H:W_f \times \R^{n-2} \to \R^{n+1}$
such that $H(x,0)=G(x)$ for every $x \in W_f$ and the diagram
\[
\xymatrix{
W_f \times \R^{n-2} \ar[d]^-P \ar[r]^-H & \R^{n+1} \ar[d]^-{\pi}\\
W_f \ar[r]_-{\bar{f}} & \R^3
}
\]
commutes.
Then the composition of $e$ with $H$ is an embedding lift of $f$.

It is trivial that the first condition implies the second one.
Thus, the first three conditions in Theorem~\ref{main-cor} are equivalent to each other.

Saeki--Takase proved that the second and the last conditions are equivalent to each other (see Theorem~6.1 in \cite{ST}).
This completes the proof of Theorem~\ref{main-cor}.
\end{proof}

\begin{rem}
Note that we can directly prove that items 3 and 4 in Theorem \ref{main-cor}
are equivalent to each other without using a result of Saeki--Takase \cite{ST} as follows.
Recall that $M$ is diffeomorphic to $\partial E$ for some linear $D^{n-2}$-bundle $E$ over $W_f$ by Theorem~\ref{fiber}.
Since $W_f$ is simply connected, we have
\[ W_f \cong W_1 \natural W_2 \natural \cdots \natural W_{b}, \]
where the symbol ``$\natural$'' denotes boundary connected sum,
$W_i \cong S^2 \times [0,1]$ $(i=1,2,\ldots,b)$ and
$b \geq 0$ (when $b=0$, $W_f \cong D^3$).
Let $E_i$ $(i=1,2,\ldots,b)$ be the $D^{n-2}$-bundle over $W_i$ induced from the inclusion $W_i \hookrightarrow W_f$. 
Then, we have
\[ M \cong \partial E_1 \, \sharp \, \partial E_2 \, \sharp \cdots \sharp \, \partial E_b.\]
Note that the manifold $\partial E_i$ is the total space of an $S^{n-2}$-bundle over $S^2$.
This is a spin manifold if and only if the bundle $E_i$ is trivial.
Therefore, the manifold $M$ is spin if and only if all the bundles $E_i$ are trivial.
Finally, it is easy to see that this last condition is equivalent to the triviality of the normal bundle $\nu_f$ of $S(f)$ in $M$.
\end{rem}

The following proposition shows that for $n \geq 4$,
there exist  special generic maps of closed simply connected $n$-dimensional manifolds
into $\R^3$ with trivial normal bundle of the singular point set.
By virtue of Theorem~\ref{main-cor}, such special generic maps for $n = 5, 6$
can be lifted to embeddings in codimension one.

\begin{prop}\label{ex1}
For $n \geq 3$, there is a special generic map $f:S^{n-2} \times S^2 \to \R^3$
such that the normal bundle $\nu_f$ of $S(f)$ in $S^{n-2} \times S^2$ is trivial.
\end{prop}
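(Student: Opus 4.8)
The idea is to exhibit $f$ completely explicitly, as the ``radial extension'' over $S^2$ of a Morse function on $S^{n-2}$ with exactly two critical points, and then to read off both the singular set of $f$ and the normal bundle of that set from an elementary local normal form.

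Here is the plan. First I would fix a Morse function $\lambda\colon S^{n-2}\to\R$ with exactly two critical points, one of index $0$ and one of index $n-2$ --- for instance an affinely rescaled restriction of a linear functional to $S^{n-2}\subset\R^{n-1}$ --- chosen so that $\lambda$ takes values in the open half-line $(0,\infty)$. Regarding $S^2$ as the unit sphere in $\R^3$, I would then set
\[
f\colon S^{n-2}\times S^2\longrightarrow\R^3,\qquad f(u,v)=\lambda(u)\,v .
\]
Since $|v|=1$, one has $|f(u,v)|=\lambda(u)$ and $f(u,v)/|f(u,v)|=v$; hence $f$ factors as $f=\Psi\circ g$, where $g=\lambda\times\mathrm{id}_{S^2}\colon S^{n-2}\times S^2\to(0,\infty)\times S^2$ and $\Psi\colon(0,\infty)\times S^2\to\R^3\setminus\{0\}$, $\Psi(\rho,w)=\rho w$, is a diffeomorphism onto the open subset $\R^3\setminus\{0\}$ of $\R^3$.

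Next I would verify that $f$ is a special generic map. The differential of $g$ at $(u,v)$ is $d\lambda_u\oplus\mathrm{id}_{T_vS^2}$, which is surjective exactly when $d\lambda_u\neq0$; therefore $S(f)=\mathrm{Crit}(\lambda)\times S^2$, a disjoint union of two copies of $S^2$, and $f$ is a submersion off this set. Near a critical point $u_0$ of $\lambda$, the Morse lemma provides coordinates $(x_3,\dots,x_n)$ on $S^{n-2}$ with $\lambda=\lambda(u_0)\pm(x_3^2+\cdots+x_n^2)$, the sign depending on the index of $u_0$; combining these with coordinates $(x_1,x_2)$ on $S^2$ near a point $v_0$, and using the diffeomorphism $\Psi$ to carry the product coordinates $(x_1,x_2,\rho)$ over to coordinates on a neighbourhood of $f(u_0,v_0)\neq0$ in $\R^3$, one checks that in these coordinates $f$ has exactly the definite fold normal form $(x_1,x_2,x_3,\dots,x_n)\mapsto(x_1,x_2,x_3^2+\cdots+x_n^2)$ at each point of $S(f)$. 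Hence $f$ has only definite fold points, that is, $f$ is special generic. I would note that this argument is uniform in $n\geq3$; for $n=3$ it is the equidimensional case $n=p=3$, which is still covered by the definition of a special generic map.

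Finally I would compute the normal bundle $\nu_f$. Over each of the two components $\{u_0\}\times S^2$ of $S(f)$, the normal bundle of $\{u_0\}\times S^2$ in $S^{n-2}\times S^2$ is canonically the trivial bundle $T_{u_0}S^{n-2}\times S^2$ of rank $n-2$; hence $\nu_f$ is trivial, as required. The only step calling for genuine care is the verification of the definite fold normal form --- in particular, being precise about which functions on $\R^3$ serve as target coordinates near $f(u_0,v_0)$ --- but this becomes routine once the factorization $f=\Psi\circ g$ is available, and every other step is immediate. (One may alternatively see this construction in the light of Theorem~\ref{fiber}, by taking $W_f=S^2\times[0,1]$ embedded in $\R^3$ as a spherical shell together with the trivial bundle $E=W_f\times D^{n-2}$ over it: then $\partial E\cong S^{n-2}\times S^2$, and the normal bundle of $S(f)$ is the restriction of $E$ to $\partial W_f$, which is trivial.)
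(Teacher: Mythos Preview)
Your argument is correct and is essentially the paper's own construction: the paper also defines $f$ as the composition of $h\times\mathrm{id}_{S^2}$ (with $h$ the height function on $S^{n-2}$) with a diffeomorphism from an interval$\,\times\,S^2$ onto an open subset of $\R^3$, and then reads off $S(f)=\{\text{two points}\}\times S^2$ with trivial normal bundle. Your explicit choice $\Psi(\rho,w)=\rho w$ for that diffeomorphism, and your more detailed verification of the definite fold normal form via the Morse lemma, just make the same proof a bit more self-contained.
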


\begin{proof}
Let $h:S^{n-2} \to \R$ be the Morse function given by
\[ h(x_1,x_2,\ldots,x_{n-1}) = x_{n-1} \]
for $(x_1,x_2,\ldots,x_{n-1}) \in S^{n-2} \subset \R^{n-1}$.
Then the composition
\[ S^{n-2} \times S^2 \xrightarrow{h \times \mathrm{id}} \R \times S^2 \to \R^3 \]
is a special generic map, where $\mathrm{id}$ is the identity map of $S^2$, and
the last map is the composition of a trivialization of the open tubular neighborhood of $S^2$ in $\R^3$ with the inclusion map.
Note that $S(f)= \{ (0,0,\ldots,0,\pm 1) \} \times S^2$ is the disjoint union
of two $2$-spheres and it has trivial normal bundle in $S^{n-2} \times S^2$.
\end{proof}

On the other hand, the following proposition implies that
there exist  special generic maps of closed simply connected $n$-dimensional manifolds
into $\R^3$ with non-trivial normal bundle of the singular point set.
By virtue of Theorem~\ref{main-cor}, such special generic maps for $n = 5, 6$ cannot be lifted to embeddings in codimension one.

\begin{prop}\label{ex2}
For $n \geq 4$, there is a special generic map $f:M \to \R^3$
such that the normal bundle $\nu_f$ of $S(f)$ in $M$ is non-trivial, where
$M$ is a non-trivial $S^{n-2}$-bundle over $S^2$.
\end{prop}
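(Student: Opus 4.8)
The plan is to twist the construction in the proof of Proposition~\ref{ex1}, replacing the trivial bundle $S^{n-2}\times S^2$ by a non-trivial one. Since $n-2\ge 2$, the group $SO(n-2)$ has non-trivial fundamental group, and one can choose a linear $D^{n-2}$-bundle $\eta$ over $S^2$ with $w_2(\eta)\neq 0$: for $n\ge 5$ take the unique non-trivial such bundle (recall $\pi_1(SO(n-2))\cong\Z_2$ when $n-2\ge 3$), and for $n=4$ take the $D^2$-bundle over $S^2$ of odd Euler number. Fix a fiber metric on $\eta$, let $\varepsilon$ denote the trivial real line bundle over $S^2$, and let
\[ M=S(\eta\oplus\varepsilon) \]
be the unit sphere bundle of the vector bundle $\eta\oplus\varepsilon$ of rank $n-1$; this is an $S^{n-2}$-bundle over $S^2$ with projection $\pi\colon M\to S^2$, and it carries two disjoint smooth sections $\Sigma_\pm$ given by $\Sigma_\pm(b)=(0_b,\pm 1)$, each diffeomorphic to $S^2$. (Equivalently, $M\cong\partial E$ where $E$ is the linear $D^{n-2}$-bundle over $S^2\times[0,1]$ induced from $\eta$; cf.\ Theorem~\ref{fiber}.)

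Next I would write down the map. Let $\hat h\colon M\to\R$ be the fiberwise height function, $\hat h(v,t)=t$ for $(v,t)\in\eta_b\oplus\R$ with $|v|^2+t^2=1$, and let $\iota\colon\R\times S^2\hookrightarrow\R^3$ be an embedding onto an open tubular neighborhood of the unit sphere, as in the proof of Proposition~\ref{ex1}. I would then set
\[ f=\iota\circ(\hat h,\pi)\colon M\longrightarrow\R^3. \]
Because $\pi$ is a submersion, the singular set of $(\hat h,\pi)$ is exactly the set of fiberwise critical points of $\hat h$, namely $\Sigma_+\cup\Sigma_-$; over a regular value of $(\hat h,\pi)$ the fiber is the unit sphere of $\eta_b$, that is $S^{n-3}$; and in a chart trivializing both $M\to S^2$ and $\eta$ near a point of $\Sigma_\pm$ the map $(\hat h,\pi)$ has the form $(b_1,b_2,v)\mapsto(\pm\sqrt{1-|v|^2},b_1,b_2)$, which the parametrized Morse lemma puts into the definite fold normal form. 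Hence $f$ is a special generic map whose singular point set $S(f)=\Sigma_+\cup\Sigma_-$ is a disjoint union of two copies of $S^2$, and whose Stein factorization $W_f$ is diffeomorphic to $S^2\times[0,1]$.

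It remains to verify the two assertions. Along $\Sigma_\pm$ the manifold $M$ is canonically identified with the total space of the disk bundle $\eta$, with $\Sigma_\pm$ as its zero section, so the normal bundle of $S(f)$ in $M$ is $\nu_f\cong\eta\sqcup\eta$, which is non-trivial because $w_2(\eta)\neq 0$. To see that $\pi\colon M\to S^2$ is a non-trivial $S^{n-2}$-bundle, observe that $\Sigma_+$ is a section of $M\to S^2$ whose normal bundle in $M$ is $\eta$, whereas any section $s$ of the trivial bundle $S^{n-2}\times S^2\to S^2$ has normal bundle $\sigma^*(TS^{n-2})$, where $\sigma=\mathrm{pr}_1\circ s\colon S^2\to S^{n-2}$; this bundle is trivial when $n\ge 5$ (since $\pi_2(S^{n-2})=0$) and has even Euler number $2\deg\sigma$ when $n=4$, so in neither case can it be isomorphic to $\eta$. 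Since a fiber-preserving diffeomorphism over $S^2$ would carry the normal bundle of $\Sigma_+$ to that of its image, $M$ cannot be the trivial bundle.

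The step I expect to be the main obstacle is this last one, and the delicate point is concentrated in the case $n=4$: there a non-trivial $D^2$-bundle over $S^2$ may have a trivial associated sphere bundle --- which happens precisely for the bundles of non-zero even Euler number --- so the parity of the Euler number must be built into the choice of $\eta$ from the outset; one also has to know that ``the normal bundle of a section'' is genuinely invariant under bundle isomorphisms over $S^2$, which is what legitimizes the comparison with $S^{n-2}\times S^2$. By contrast, checking that $f$ is smooth and that it has the definite fold normal form along $\Sigma_\pm$ is routine, just as in the proof of Proposition~\ref{ex1}.
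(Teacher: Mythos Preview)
Your proof is correct and is essentially the same construction as the paper's: the paper builds $M$ by gluing two copies of $S^{n-2}\times D^2$ via the rotation $\Phi(x,e^{it})=(g_t(x),e^{it})$ in the first two coordinates of $S^{n-2}$---which is exactly your $S(\eta\oplus\varepsilon)$ for the standard non-trivial $\eta$---and takes $f$ to be the fiberwise height $x\mapsto x_{n-1}$ followed by an embedding $\R\times S^2\hookrightarrow\R^3$. Your version actually supplies more detail (the identification $\nu_f\cong\eta$, the care with odd Euler number when $n=4$, and the section-normal-bundle argument for non-triviality) where the paper simply asserts that $M$ is a non-trivial bundle and that $w_2(\nu_f)\neq 0$.
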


\begin{proof}
For real numbers $t$ with $0 \leq t \leq 2\pi$,
we define the diffeomorphism $g_{t}:S^{n-2} \to S^{n-2}$ by 
\[ g_{t}(x_1,x_2,\ldots,x_{n-1})=
(x_1\cos t - x_2 \sin t,x_1\sin t+x_2\cos t,x_3,x_4,\ldots,x_{n-1}) \]
for $(x_1,x_2,\ldots,x_{n-1}) \in S^{n-2}$.
Note that we have $g_0=g_{2\pi}$.
By using this map, we define the diffeomorphism
$\Phi : S^{n-2} \times \partial D^2 \to S^{n-2} \times \partial D^2$ by
\[\Phi(x,(\cos t, \sin t)) = (g_t(x),(\cos t, \sin t)) \]
for $x \in S^{n-2}$ and $0 \leq t \leq 2\pi$.
Pasting $S^{n-2} \times D^2$ and its copy along the boundary by $\Phi$,
we obtain the closed $n$-dimensional manifold $M$.
It is easy to see that $M$ is a non-trivial $S^{n-2}$-bundle over $S^2$.

Now, we define the special generic map $h:S^{n-2} \to \mathbb{R}$ by
\[ h(x_1,x_2,\ldots,x_{n-1})=x_{n-1} \]
for $(x_1,x_2,\ldots,x_{n-1}) \in S^{n-2}$.
Then we have
\[ (h \times \mathrm{id}) \circ \Phi = h \times \mathrm{id} :
S^{n-2} \times \partial D^2 \to \mathbb{R} \times  \partial D^2. \]
Therefore, the map
\[ (h \times \mathrm{id}) \cup (h \times \mathrm{id}) :
M = (S^{n-2} \times D^2) \cup_{\Phi} (S^{n-2} \times D^2)
\rightarrow (\mathbb{R} \times D^2) \cup (\mathbb{R} \times D^2) \]
is well-defined, where
\[ (\mathbb{R} \times D^2) \cup (\mathbb{R} \times D^2) = \R \times S^2 \]
is the space obtained by pasting
$\mathbb{R} \times D^2$ and its copy along the boundary by the identity map.
So we obtain the composition map
\[ f : M \longrightarrow \mathbb{R} \times S^2 \rightarrow \mathbb{R}^3, \]
where the last map is the composition of a trivialization of the open tubular neighborhood of
$S^2$ in $\mathbb{R}^3$ with the inclusion map.
Since the second Stiefel-Whieney class of $\nu_f$ does not vanish, 
we see that the map $f:M \to \mathbb{R}^3$ is a special generic map and that
$S(f)$ is the disjoint union of two $2$-spheres with non-trivial normal bundle in $M$
\end{proof}

The map $f$ in the following proposition cannot be lifted to an embedding
into $\R^{n+1}$ by a result in \cite{ST}.
So Theorems~\ref{main} and \ref{main-cor} do not hold if we drop the condition
that $M$ should be simply  connected.

\begin{prop}
For $n=5, 6$, there exists a special generic map $f:M \to \R^3$ of
a closed orientable $n$-dimensional manifold $M$ such that
the normal bundle $\nu_f$ of $S(f)$ in $M$ is trivial
and that $M$ is neither spin nor simply connected.
\end{prop}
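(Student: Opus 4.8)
The plan is to realize $M$ as the boundary of a linear disk bundle over a compact $3$--manifold $W$ chosen so that $\nu_f$ is forced to be trivial while $w_2(M)$ is forced to be non-zero. The Remark following Theorem~\ref{main-cor} shows that when $W$ is simply connected the vanishing of $w_2(M)$ is equivalent to the triviality of $\nu_f$; this coupling is exactly what rules out a simply connected example, and it breaks down as soon as the restriction map $H^2(W;\Z_2)\to H^2(\partial W;\Z_2)$ has non-trivial kernel. Such a kernel is impossible for a product $\Sigma\times[0,1]$ but occurs for a once--punctured closed $3$--manifold, which is why the source manifold we build will be non-simply connected.

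Concretely, I would take $W=(S^1\times S^2)\setminus\mathrm{Int}\,D^3$. Then $\partial W=S^2$, and since $W$ is homotopy equivalent to $S^1\vee S^2$ we have $\pi_1(W)\cong\Z$ and $H^2(W;\Z_2)\cong\Z_2$; the generator $\alpha\in H^2(W;\Z_2)$ restricts to $0$ in $H^2(\partial W;\Z_2)$ because $\partial W\hookrightarrow W$ factors through the removed disk $D^3$. As $W$ has the homotopy type of a $2$--complex, $H^3(W;\Z)=0$, so the mod $2$ reduction $H^2(W;\Z)\to H^2(W;\Z_2)$ is onto; pick a complex line bundle $L$ over $W$ with $c_1(L)\equiv\alpha\pmod 2$ and set $\tilde E=L\oplus\underline{\R}^{n-4}$, an orientable rank $n-2$ bundle over $W$ with $w_2(\tilde E)=\alpha$ and $w_2(\tilde E)|_{\partial W}=0$. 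Let $E$ be the associated linear $D^{n-2}$--bundle over $W$. As a compact orientable $3$--manifold with boundary, $W$ is parallelizable, hence immerses in $\R^3$ by the Hirsch immersion theorem, and Saeki's construction of special generic maps out of linear disk bundles over manifolds immersing in $\R^3$ (see \cite{saeki}) then yields a special generic map $f\colon M:=\partial E\to\R^3$ whose Stein factorization is $W$, with $q_f$ the bundle projection $\partial E\to W$ and with $\nu_f\cong\tilde E|_{\partial W}$.

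Granting this, the four required properties follow quickly. The manifold $M=\partial E$ is orientable because $E$ is an orientable manifold. It is not simply connected since $\pi_1(M)\cong\pi_1(W)\cong\Z$ by Theorem~\ref{stein}(5). Its normal bundle $\nu_f\cong\tilde E|_{\partial W}$ is an orientable bundle of rank $n-2\ge3$ over $S^2$ with $w_2=0$; such bundles are classified by $w_2\in H^2(S^2;\Z_2)$, so $\nu_f$ is trivial. Finally $M$ is not spin: from $TM\oplus\underline{\R}\cong q_f^{*}(TW\oplus\tilde E)$ and the triviality of $TW$ we get $w_2(M)=q_f^{*}\alpha$, while the Thom isomorphism gives $H^2(E,\partial E;\Z_2)\cong H^{4-n}(W;\Z_2)=0$ for $n\ge5$, so in the cohomology exact sequence of the pair $(E,\partial E)$ the map $q_f^{*}\colon H^2(W;\Z_2)\to H^2(M;\Z_2)$ is injective; hence $w_2(M)=q_f^{*}\alpha\neq0$.

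I expect the genuine difficulty to lie in the last sentence of the second paragraph: one must verify carefully, quoting \cite{saeki}, that every linear $D^{n-2}$--bundle over a compact $3$--manifold immersing in $\R^3$ really arises as the disk--bundle model of a special generic map into $\R^3$ with the prescribed Stein factorization, and that the normal bundle of the singular point set is canonically $\tilde E|_{\partial W}$ rather than some twisted form of it. The remaining ingredients — computing $H^2(W;\Z_2)$ and its restriction to $\partial W$, the surjectivity of the integral reduction, and the injectivity of $q_f^{*}$ via the Thom isomorphism — are routine.
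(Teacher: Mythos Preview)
Your approach is essentially the same as the paper's. Both take $W=(S^1\times S^2)\setminus\mathrm{Int}\,D^3$ and the orientable rank $n-2$ bundle over $W$ with $w_2$ equal to the generator of $H^2(W;\Z_2)$; the paper builds this bundle concretely as the restriction to $W$ of the product $E_1\times S^1$, where $E_1\to S^2$ is the non-trivial orientable $D^{n-2}$-bundle, whereas you build it as $L\oplus\underline{\R}^{n-4}$ for a suitable complex line bundle $L$ --- these are isomorphic bundles. Both then invoke Saeki's construction from \cite{saeki} to realize $\partial E$ as the source of a special generic map with Stein factorization $W$. If anything, your write-up is more complete: the paper's proof does not spell out why $M$ fails to be spin, while your argument via $TM\oplus\underline{\R}\cong q_f^{*}(TW\oplus\tilde E)$ together with the Thom isomorphism computation $H^2(E,\partial E;\Z_2)\cong H^{4-n}(W;\Z_2)=0$ makes this explicit. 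The point you flag as the genuine difficulty --- that Saeki's construction really yields $\nu_f\cong\tilde E|_{\partial W}$ --- is exactly the point the paper addresses by noting that $q_f$ can be made to coincide with the bundle projection $\pi$ over a collar of $\partial W_f$.
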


\begin{proof}
Let $\pi_1:E_1 \to S^2$ be the projection of the non-trivial orientable linear $D^{n-2}$-bundle over $S^2$
(such a bundle uniquely exists up to isomorphism since $\pi_1(\mathit{SO}(n-2))=\mathbb{Z}_2$).
Then the product map $\pi_2=\pi_1 \times \mathrm{id}_{S^1}:E_1 \times S^1 \to S^2 \times S^1$
is a non-trivial orientable linear $D^{n-2}$-bundle over $S^2 \times S^1$.
Set $W=S^2 \times S^1 \setminus \mathrm{Int}\, D_1$,  $E=\pi_2^{-1}(W)$ and
$\pi=\pi_2|_{E}:E \to W$, where $D_1$ is a $3$-ball in $S^2 \times S^1$.
Then, $\pi:E \to W$ is a non-trivial orientable linear $D^{n-2}$-bundle over $W$.
It is clear that $W$ can  be immersed in $\R^3$.
Then, by using the same method used in the proof of \cite[Proposition~2.1]{saeki},
we can construct a special generic map $f:M \to \R^3$ such that $M=\partial E$, $W_f=W$ and
$q_f:M \to W_f$ coincide with $\pi$ over $q_f^{-1}(C)$ for some collar neighborhood $C$ of $\partial W_f$ in $W_f$.
Note that the normal bundle $\nu_f$ of $S(f)$ in $M$ is trivial, since $\pi:E \to W$ is trivial over $\partial W$.
This completes the proof.
\end{proof}

The following theorem is proved by a method similar to that used in the proofs of 
Theorems~\ref{main} and \ref{main-cor}.

\begin{thm}\label{main2}
Let $f:M \to \R^3$ be a special generic map of
a closed orientable $n$-dimensional manifold, $n \geq 5$.
Then the quotient map $q_f:M \to W_f$ lifts to an embedding into $W_f \times \R^{k-3}$
with respect to the projection $P:W_f \times \R^{k-3} \to W_f$
if the normal bundle $\nu_f$ of the singular point set $S(f)$ in $M$
is  trivial and $k \geq (3n+3)/2$.
\end{thm}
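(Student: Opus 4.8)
The plan is to adapt the proof of Theorem~\ref{main}. There, since $W_f$ is simply connected, only $1$-handles and a single $3$-handle occur, and the construction rests on $\mathrm{Diff}_{+}(S^{n-3})$ being $0$-connected and satisfying $\pi_2\mathrm{Diff}_{+}(S^{n-3})=0$. For a general closed orientable $M$, the manifold $W_f$ is a compact orientable $3$-manifold with boundary whose handle decomposition also contains $2$-handles, so the lift must be pushed across those as well; since $\pi_1\mathrm{Diff}_{+}(S^{n-3})\neq 0$ in general, we instead work in the larger codimension $k-3$ and use the space $\mathrm{Emb}(S^{n-3},\R^{k-3})$, which by Budney \cite{budney} is $2$-connected exactly in the range $k\ge(3n+3)/2$. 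As in the earlier proof I may assume that $M$ is connected; then $W_f$ is compact, connected and orientable (the latter since $\bar{f}:W_f\to\R^3$ is an immersion) with non-empty boundary, and no orientation conventions on the fibres are needed here because $\mathrm{Emb}(S^{n-3},\R^{k-3})$ is connected.

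First I would fix a handle decomposition of $W_f$ relative to a collar $C=\partial W_f\times[0,1]$ of $\partial W_f$,
\[ W_f \;=\; C\ \cup\ \Bigl(\bigcup_{i} h^1_i\Bigr)\ \cup\ \Bigl(\bigcup_{j} h^2_j\Bigr)\ \cup\ \Bigl(\bigcup_{\ell} h^3_\ell\Bigr), \]
with handles of index $1$, $2$ and $3$ only (such a decomposition exists for every compact connected $3$-manifold with non-empty boundary). Over the collar, the triviality of $\nu_f$ gives, exactly as in the proof of Theorem~\ref{main}, that $q_f^{-1}(C)\to\partial W_f$ is a trivial $D^{n-2}$-bundle; fixing a trivialization $H_C$ and noting that $k-3\ge n-2$ (which follows from $k\ge(3n+3)/2$), the composite of $\mathrm{pr}_2\circ H_C$ with the standard inclusion $D^{n-2}\subset\R^{k-3}$ gives an embedding lift $e_0:q_f^{-1}(C)\to C\times\R^{k-3}$ of $q_f|_{q_f^{-1}(C)}$ with respect to $P$, verified by the same diagram chase ($e_0\circ H_C^{-1}$ has the form $(x,y)\mapsto(x,K(x,y),y)$).

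Next I would extend the lift inductively over the handles, in order of increasing index, by the same device used to push $e_1$ over the $1$-handles and then over the $3$-handle in the proof of Theorem~\ref{main}, the $2$-handle step being entirely analogous. Over a $j$-handle $h$ the $S^{n-3}$-bundle $q_f^{-1}(h)\to h$ is trivial since $h$ is contractible, and after fixing such a trivialization the lift already constructed over the attaching region $\partial_{-}h\simeq S^{j-1}$ determines a continuous map $\partial_{-}h\to\mathrm{Emb}(S^{n-3},\R^{k-3})$. Since $\mathrm{Emb}(S^{n-3},\R^{k-3})$ is $2$-connected and $j-1\le 2$, this map extends over $h\simeq D^j$; pulling the extension back along a deformation retract of $h$ onto the union of its attaching region and its core, smoothing rel $\partial_{-}h$, and composing with $\R^{k-3}$, one obtains a fibrewise embedding of $q_f^{-1}(h)$ into $h\times\R^{k-3}$ agreeing with the previous stage on the overlap. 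Gluing all the pieces produces the required embedding lift $F:M\to W_f\times\R^{k-3}$ of $q_f$ with respect to $P$.

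The steps needing care --- and where I expect the bulk of the work --- are the verification that at each stage the glued map is a \emph{global} embedding rather than merely a fibrewise one, and that the handle decomposition can be taken relative to the collar with no $0$-handles. The first is routine: a map $(x,y)\mapsto(x,\Phi(x)(y))$ with each $\Phi(x)$ an embedding of the compact manifold $S^{n-3}$ is automatically injective and an immersion, and a $C^1$-small smoothing rel the overlap preserves both properties. The essential new ingredient over Theorem~\ref{main} is homotopy-theoretic --- Budney's $2$-connectivity of $\mathrm{Emb}(S^{n-3},\R^{k-3})$ for $k\ge(3n+3)/2$ --- and it is exactly what allows the lift to be pushed across the $1$-, $2$- and $3$-handles of $W_f$.
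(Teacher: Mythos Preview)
Your proposal is correct and follows essentially the same approach as the paper: a handle decomposition of $W_f$ relative to a collar of $\partial W_f$, the lift over the collar built from a trivialization of the $D^{n-2}$-bundle coming from the triviality of $\nu_f$, and then extension over $1$-, $2$- and $3$-handles using, respectively, the $0$-, $1$- and $2$-connectivity of $\mathrm{Emb}(S^{n-3},\R^{k-3})$ furnished by Budney's theorem in the range $k\ge(3n+3)/2$. The paper carries out these steps in full detail (with a single $3$-handle, which suffices since $W_f$ is connected), but the strategy and the key ingredient are the same as yours.
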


We need the following proposition to prove Theorem~\ref{main2}.

\begin{prop}[Budney, \cite{budney}]\label{budney-lemma}
The embedding space $\mathrm{Emb}(S^n,\R^k)$ is
$\min \{ 2k-3n-4, k-n-2 \}$-connected
if $k \geq n+2 \geq 3$.
\end{prop}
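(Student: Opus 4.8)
The plan is to prove the proposition by comparing $\mathrm{Emb}(S^n,\R^k)$ with a space of equivariant maps built from the deleted product, following the metastable embedding theory of Haefliger and Weber (which may equally be viewed as the quadratic stage of the Goodwillie--Weiss embedding tower). The two numbers $2k-3n-4$ and $k-n-2$ will appear, respectively, as the connectivity of the homotopy fibre of a comparison map and as the connectivity of its target; taking their minimum yields the stated bound.

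First I would introduce the \emph{difference map}
\[ d:\mathrm{Emb}(S^n,\R^k)\longrightarrow \mathrm{Map}^{\Z_2}\bigl(\tilde{S^n},S^{k-1}\bigr),\qquad d(F)(x,y)=\frac{F(x)-F(y)}{|F(x)-F(y)|}, \]
where $\tilde{S^n}=(S^n\times S^n)\setminus\Delta$ is the deleted product with the swap $\Z_2$-action and $S^{k-1}$ carries the antipodal action; the map is equivariant because interchanging $x$ and $y$ negates the difference. The core analytic input, and the step I expect to be the main obstacle, is the assertion that $d$ is $(2k-3n-3)$-connected, i.e.\ that its homotopy fibre is $(2k-3n-4)$-connected. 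This is precisely the metastable-range statement: in codimension at least three and for $2k\geq 3(n+1)$, the Gauss (difference) construction captures embeddings through this range via the disjunction and surgery arguments of Haefliger and Weber. Outside the metastable range the number $2k-3n-4$ is at most $-2$ and the assertion is vacuous, which is consistent with the weak hypothesis $k\geq n+2$. I would either cite this connectivity estimate or reconstruct it from the second layer of the Goodwillie--Weiss tower, where the relevant constant is standard.

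Next I would identify the target. The deleted product of the sphere admits the $\Z_2$-equivariant deformation retraction $(x,y)\mapsto (x-y)/|x-y|$ onto the antidiagonal, giving a $\Z_2$-homotopy equivalence $\tilde{S^n}\simeq S^n$ with the antipodal action. Hence
\[ \mathrm{Map}^{\Z_2}\bigl(\tilde{S^n},S^{k-1}\bigr)\simeq \mathrm{Map}^{\Z_2}\bigl(S^n,S^{k-1}\bigr)\cong \Gamma(E), \]
the space of sections of the fibre bundle $E=S^n\times_{\Z_2}S^{k-1}\to \R P^n$ with fibre $S^{k-1}$. Since $S^{k-1}$ is $(k-2)$-connected and the base $\R P^n$ has dimension $n$, a routine obstruction-theory count shows that $\Gamma(E)$ is $(k-n-2)$-connected: the obstructions to connectivity in degree $j$ lie in $H^i(\R P^n;\pi_{i+j}(S^{k-1}))$ with $0\leq i\leq n$, and these vanish as soon as $n+j\leq k-2$, i.e.\ $j\leq k-n-2$.

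Finally I would assemble the two estimates. Because $d$ is $(2k-3n-3)$-connected, the induced homomorphism $\pi_i(\mathrm{Emb}(S^n,\R^k))\to\pi_i(\Gamma(E))$ is an isomorphism for $i\leq 2k-3n-4$, while $\pi_i(\Gamma(E))=0$ for $i\leq k-n-2$; combining these through the long exact sequence of the homotopy fibre gives $\pi_i(\mathrm{Emb}(S^n,\R^k))=0$ for $i\leq \min\{2k-3n-4,\,k-n-2\}$, as claimed. As a consistency check, the analogous computation for $n=0$ makes $d$ an honest homotopy equivalence onto $\mathrm{Map}^{\Z_2}(S^0,S^{k-1})\simeq S^{k-1}$, and the formula correctly returns $(k-2)$-connectivity. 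The decisive difficulty is thus the precise $(2k-3n-3)$ connectivity of the difference map $d$; once that is granted, the identification of the target and the connectivity arithmetic are formal.
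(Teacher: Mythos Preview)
The paper does not prove this proposition at all: it is stated as a result of Budney and is simply cited from \cite{budney}, so there is no ``paper's own proof'' to compare against. Your proposal is therefore not a comparison target but an independent reconstruction of Budney's connectivity estimate.

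As such, your outline is sound and follows the standard metastable template. The identification of the deleted product $\tilde{S^n}$ with $S^n$ carrying the antipodal action is correct (the map $z\mapsto(z,-z)$ is an equivariant section of your difference map, and the remaining homotopy is a geodesic push), and the obstruction-theory count giving $(k-n-2)$-connectivity of the section space is the routine ``fibre is $(k-2)$-connected over an $n$-dimensional base'' argument. The only substantive input is, as you correctly flag, the $(2k-3n-3)$-connectivity of the Haefliger--Weber difference map $d$; this is a deep theorem (Haefliger, Weber, or the Goodwillie--Klein convergence estimate for the quadratic layer of the embedding tower) and you are right to treat it as a citation rather than something to reprove. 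Budney's own argument in \cite{budney} proceeds via an iterated fibration decomposition of $\mathrm{Emb}(S^n,\R^k)$ rather than directly through the Haefliger--Weber map, but the two routes are closely related and yield the same numerical bound, so your approach is an acceptable alternative derivation.

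One small point: you should check that the section space is nonempty before discussing its higher connectivity; this follows from the same obstruction count since $k-1>n$ under the hypothesis $k\geq n+2$.
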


\begin{proof}[Proof of Theorem \ref{main2}]
By Proposition~\ref{budney-lemma}, since $k \geq (3n+3)/2$ and $n \geq 5$,
we have that the embedding space $\mathrm{Emb}(S^{n-3},\R^{k-3})$ is $2$-connected.
This is a key to proving Theorem~\ref{main2}.

We may assume that $M$ is connected.
Then, by Theorem~\ref{stein}, the quotient space $W_f$
has the structure of a smooth compact orientable connected
$3$-dimensional manifold with non-empty boundary.
So we have a handle decomposition of $W_f$ as follows:
\[ W_f = (\partial W_f \times [0,1]) \cup \Biggl(\bigcup_{i=1}^{s} h_{i}^1\Biggr)
\cup \Biggl(\bigcup_{j=1}^{t} h_{j}^2\Biggr) \cup h^3, \]
where $h_i^1$, $i=1,2,\ldots,s$, are $1$-handles,
$h_j^2$, $j=1,2,\ldots,t$, are $2$-handles, and $h^3$ is a $3$-handle.
Let $C (= \partial W_f \times [0,1])$ be the collar neighborhood of $\partial W_f$ in $W_f$.
Here, $\partial W_f$ corresponds to $\partial W_f \times \{ 0 \}$.

By using the same method used in the proof of Theorem~\ref{main}, we can construct
an embedding lift $e_1$ of $q_{f}|_{q_{f}^{-1}(C)}$ with respect to $P|_{C \times \R^{k-3}}$.

We will extend $e_1$ to an embedding lift of
the restriction of $q_f$ to $q_f^{-1}(C \cup h_1^1)$ with respect to the restriction
\[ P|_{(C \cup h_1^1) \times \R^{k-3}}:(C \cup h_1^1) \times \R^{k-3} \to C \cup h_1^1.\]
Note that $h_{1}^1$ is identified with $D^2 \times D^1$ and
is attached to $C$ along $D^2 \times S^0$.

Since the $1$-handle $h_1^1$ is contractible, we have a bundle trivialization
\[ H_{1,1}:q_f^{-1}(h_1^1) \to h_1^1 \times S^{n-3}. \]
We have two end points $(0,\pm 1)$ of the core of the $1$-handle $h_1^1=D^2 \times D^1$.
Then, we define the embeddings
$\phi_{\pm}:S^{n-3} \to \R^{k-3}$ as the composition
\[ S^{n-3} = \{(0,\pm 1)\} \times S^{n-3} \to q_f^{-1}(\{(0,\pm 1)\}) \to \{(0,\pm 1)\} \times \R^{k-3} = \R^{k-3}, \]
where the first map is $H_{1,1}^{-1}$ restricted to $\{(0,\pm 1)\} \times S^{n-3}$,
the second map is $e_1$ restricted to $q_f^{-1}(\{(0,\pm 1)\})$
and the double-sign corresponds in the same order.

Since $\mathrm{Emb}(S^{n-3},\R^{k-3})$ is connected,
there is a continuous path between $\phi_{-}$ and $\phi_{+}$ in $\mathrm{Emb}(S^{n-3},\R^{k-3})$.
This induces a topological embedding
\[ q_f^{-1}(\{0\} \times D^1) (= (\{0\} \times D^1) \times S^{n-3}) \to (\{0\} \times D^1) \times \R^{k-3}, \]
which is an embedding on each $S^{n-3}$-fiber. This coincides with 
\[ e_1|_{q_f^{-1}(D^2 \times \{-1,+1\})}:q_f^{-1}(D^2 \times \{-1,+1\}) \to (D^2 \times \{-1,+1\}) \times \R^{k-3} \]
over $q_f^{-1}(\{0\} \times \{-1,+1\})$.
Therefore, by gluing the two maps, we have a topological embedding
\[ q_f^{-1}(X) (=X \times S^{n-3}) \to X \times \R^{k-3}, \]
where $X=(D^2 \times \{-1,+1\}) \cup (\{0\} \times D^1)$.
By composing this with the natural projection, we have a continuous map
\[ \phi_1:q_f^{-1}(X) (=X \times S^{n-3}) \to \R^{k-3}. \]
Let $\phi_2:h_1^1 \times S^{n-3} \to \R^{k-3}$ be the continuous map
defined by $\phi_2(x,y)=\phi_1(r_1(x),y)$ for $x \in h_1^1$ and $y \in S^{n-3}$, where
$r_1:h_1^1 \to  X $ is a deformation retract.
Then a smooth approximation $\phi_3:h_1^1 \times S^{n-3} \to \R^{k-3}$ of $\phi_2$
such that $\phi_3|_{D^2 \times \{-1,+1\} \times S^{n-3}}=\phi_2|_{D^2 \times \{-1,+1\} \times S^{n-3}}$
induces a smooth embedding of $\{x\} \times S^{n-3}$ into $\R^{k-3}$ for every $x \in h_1^1$, since so does $\phi_2$.
Consequently, we have a smooth injection
\[ \phi_4:h_1^1 \times S^{n-3} \to h_1^1 \times \R^{k-3} \]
defined by $\phi_4(x,y)=(x,\phi_3(x,y))$, $(x,y) \in h_1^1 \times S^{n-3}$.
Since the derivative of $\phi_4$ at each point is injective and $h_1^1 \times S^{n-3}$ is compact,
we have that $\phi_4$ is an embedding.
Put $e_2=\phi_4$. Then $e_2$ is an embedding lift of $q_f|_{q_f^{-1}(h_1^1)}$
with respect to $P$ restricted to $h_1^1 \times \R^{k-3}$.
Since $e_1$ and $e_2$ coincide on the intersection of their sources,
by glueing the two maps $e_1$ and $e_2$, we have an embedding lift of
$q_f|_{q_{f}^{-1}(C \cup h_1^1)}$ with respect to $P$ restricted to $(C \cup h_1^1) \times \R^{k-3}$.

By iterating this procedure, we construct an embedding lift $e_3$ of
$q_f$ restricted to $q_{f}^{-1}(C \cup (\bigcup_{i=1}^{s} h_i^1))$
with respect to $P$ restricted to $(C \cup (\bigcup_{i=1}^{s} h_i^1)) \times \R^{k-3}$.

Put $W_1=C \cup (\bigcup_{i=1}^{s} h_i^1)$.
We will extend $e_3$ to an embedding lift of
the restriction of $q_f$ to $q_f^{-1}(W_1 \cup h_1^2)$ with respect to the restriction
\[ P|_{(W_1 \cup h_1^2) \times \R^{k-3}}:(W_1 \cup h_1^2) \times \R^{k-3} \to W_1 \cup h_1^2.\]
Note that $h_{1}^2$ is identified with $D^1 \times D^2$ and
is attached to $W_1$ along $D^1 \times S^1$.

Since the $2$-handle $h_1^2$ is contractible, we have a bundle trivialization
\[ H_{1,2}:q_f^{-1}(h_1^2) \to h_1^2 \times S^{n-3}. \]
We have the circle $\{0\} \times S^1$ as the core of the attaching annulus of the $2$-handle $h_1^2=D^1 \times D^2$.
Then, we define the embedding $\psi_{t}:S^{n-3} \to \R^{k-3}$ $(t \in S^1)$ as the composition
\[ S^{n-3} = \{(0,t)\} \times S^{n-3}  \to q_f^{-1}(\{(0,t)\}) \to
\{(0,t)\} \times \R^{k-3} = \R^{k-3}, \]
where the first map is $H_{1,2}^{-1}$ restricted to $\{(0,t)\} \times S^{n-3}$ and
the second map is $e_3$ restricted to $q_f^{-1}(\{(0,t)\})$.
The family $\{\psi_t\}_{t \in S^1}$ induces a continuous map $\psi_0$ of  $S^1$ into $\mathrm{Emb}(S^{n-3},\R^{k-3})$.

Since $\pi_1\mathrm{Emb}(S^{n-3},\R^{k-3})=0$,
the map $\psi_0$ extends to a continuous map of $D^2$ into $\mathrm{Emb}(S^{n-3},\R^{k-3})$.
This induces a topological embedding
\[ q_f^{-1}(\{0\} \times D^2) (= (\{0\} \times D^2) \times S^{n-3}) \to (\{0\} \times D^2) \times \R^{k-3}, \]
which is an embedding on each $S^{n-3}$-fiber. This coincides with 
\[ e_3|_{q_f^{-1}(D^1 \times S^1)}:q_f^{-1}(D^1 \times S^1) \to (D^1 \times S^1) \times \R^{k-3} \]
over $q_f^{-1}(\{0\} \times S^1)$.
Therefore, by gluing the two maps, we have a topological embedding
\[ q_f^{-1}(Y) (=Y \times S^{n-3}) \to Y \times \R^{k-3}, \]
where $Y=(D^1 \times S^1) \cup (\{0\} \times D^2)$.
By composing this with the natural projection, we have a continuous map
\[ \psi_1:q_f^{-1}(Y) (=Y \times S^{n-3}) \to \R^{k-3}. \]
Let $\psi_2:h_1^2 \times S^{n-3} \to \R^{k-3}$ be the continuous map
defined by $\psi_2(x,y)=\psi_1(r_2(x),y)$ for $x \in h_1^2$ and $y \in S^{n-3}$, where
$r_2:h_1^2 \to  Y$ is a deformation retract.
Then a smooth approximation $\psi_3:h_1^2 \times S^{n-3} \to \R^{k-3}$ of $\psi_2$
such that $\psi_3|_{D^1 \times S^1 \times S^{n-3}}=\psi_2|_{D^1 \times S^1 \times S^{n-3}}$
induces a smooth embedding of $\{x\} \times S^{n-3}$ into $\R^{k-3}$ for every $x \in h_1^2$, since so does $\psi_2$.
Consequently, we have a smooth injection
\[ \psi_4:h_1^2 \times S^{n-3} \to h_1^2 \times \R^{k-3} \]
defined by $\psi_4(x,y)=(x,\psi_3(x,y))$, $(x,y) \in h_1^2 \times S^{n-3}$.
Since the derivative of $\psi_4$ at each point is injective and $h_1^2 \times S^{n-3}$ is compact,
we have that $\psi_4$ is an embedding.
Put $e_4=\psi_4$. Then $e_4$ is an embedding lift of $q_f|_{q_f^{-1}(h_1^2)}$
with respect to $P$ restricted to $h_1^2 \times \R^{k-3}$.
Since $e_3$ and $e_4$ coincide on the intersection of their sources,
by glueing the two maps $e_3$ and $e_4$, we have an embedding lift of
$q_f|_{q_{f}^{-1}(W_1 \cup h_1^2)}$ with respect to $P$ restricted to $(W_1 \cup h_1^2) \times \R^{k-3}$.

By iterating this procedure, we construct an embedding lift $e_5$ of
$q_f$ restricted to $q_{f}^{-1}(W_1 \cup (\bigcup_{j=1}^{t} h_j^2))$
with respect to $P$ restricted to $(W_1 \cup (\bigcup_{j=1}^{t} h_j^2)) \times \R^{k-3}$.

Now, let us extend the lift $e_5$ to the whole $W_f$.
Since the $3$-handle $h^3$ is contractible, we have a bundle trivialization
\[ H_{3}:q_f^{-1}(h^3) \to h^3 \times S^{n-3}.\]
Then, we define the continuous map $\rho_1:\partial h^3 \times S^{n-3} \to \R^{k-3}$ by the composition
\[ \partial h^3 \times S^{n-3} \to q_f^{-1}(\partial h^3) \to \partial h^3 \times \R^{k-3} \to \R^{k-3}, \]
where the first map is the restriction of $H_{3}^{-1}$ to $\partial h^3 \times S^{n-3}$,
the second map is the restriction of $e_5$ to $q_f^{-1}(\partial h^3)$
and the last map is the projection.

Recall that $\pi_2 \mathrm{Emb}(S^{n-3},\R^{k-3})=0$.
Therefore, the continuous map $\rho_1$ extends to
a continuous map $\rho_2:h^3 \times S^{n-3} \to \R^{k-3}$ which induces
a smooth embedding into $\R^{k-3}$ on each $S^{n-3}$-fiber.
Then a smooth approximation $\rho_3:h^3 \times S^{n-3} \to \R^{k-3}$ of $\rho_2$
such that $\rho_3|_{\partial h^3 \times S^{n-3}} = \rho_2|_{\partial h^3 \times S^{n-3}}$ induces
a smooth embedding on each $S^{n-3}$-fiber, since so does $\rho_2$.
So we have a smooth injection
\[ \rho_4:h^3 \times S^{n-3} \to h^3 \times \R^{k-3} \]
defined by $\rho_4(x,y)=(x,\rho_3(x,y))$ for $x \in h^3$ and $y \in S^{n-3}$.

Since the derivative of $\rho_4$ at each point is injective and $h^3 \times S^{n-3}$ is compact,
we have that $\rho_4$ is a smooth embedding.
Put $e_6=\rho_4$.
Then, since $e_5$ and $e_6$ coincide on the intersection of their sources,
by gluing the two maps $e_5$ and $e_6$, we have an embedding lift of $q_f$ with respect to $P$.
This completes the proof.
\end{proof}

As a consequence of Theorem~\ref{main2},
by using the same method as that in the proof of Theorem~\ref{main-cor},
we have the following result.

\begin{thm}\label{main2-cor}
Let $f:M \to \R^3$ be a special generic map of
a closed orientable $n$-dimensional manifold, $n \geq 5$.
Then $f$ lifts to an embedding into $\R^k$ with respect to the natural projection $\pi:\R^k \to \R^3$
if the normal bundle $\nu_f$ of the singular point set $S(f)$ in $M$ is  trivial and $k \geq (3n+3)/2$.
\end{thm}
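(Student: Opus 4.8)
The plan is to derive Theorem~\ref{main2-cor} from Theorem~\ref{main2} in precisely the way Theorem~\ref{main-cor} was derived from Theorem~\ref{main}. First I would reduce to the case that $M$ is connected: if $M$ has several components, I would apply the connected case to each one, obtaining an embedding lift of $f$ on each component, and then make the images pairwise disjoint by translating each in the $\R^{k-3}$-direction of $\R^k = \R^3 \times \R^{k-3}$ by a sufficiently large vector (this does not change the composition with $\pi$, and each component image is compact, so disjointness can be arranged). Assuming $M$ connected, Theorem~\ref{stein} tells us that $W_f$ is a smooth compact connected orientable $3$-manifold with non-empty boundary, and since $\nu_f$ is trivial and $k \geq (3n+3)/2$, Theorem~\ref{main2} yields an embedding $e:M \to W_f \times \R^{k-3}$ with $P \circ e = q_f$, where $P:W_f \times \R^{k-3} \to W_f$ is the projection onto the first factor.

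Next I would promote the immersion $\bar f:W_f \to \R^3$ to an embedding lift into $\R^k$. Since $n \geq 5$ gives $k \geq 9$ and hence $k-3 \geq 6 = 2 \cdot 3$, the Whitney embedding theorem supplies an embedding $\bar e:W_f \to \R^{k-3}$; then $G = (\bar f, \bar e):W_f \to \R^3 \times \R^{k-3} = \R^k$ is an embedding with $\pi \circ G = \bar f$, where $\pi:\R^k \to \R^3$ is the standard projection. Because $W_f$ is compact, a tubular-neighborhood argument extends $G$ to an embedding $H:W_f \times \R^{k-3} \to \R^k$ with $H(x,0) = G(x)$ for all $x \in W_f$ and satisfying $\pi \circ H = \bar f \circ P$. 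The composition $F = H \circ e:M \to \R^k$ is then an embedding, being a composite of embeddings, and $\pi \circ F = \pi \circ H \circ e = \bar f \circ P \circ e = \bar f \circ q_f = f$, so $F$ is the desired embedding lift.

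The substantive difficulty is entirely contained in Theorem~\ref{main2}: the handle-by-handle construction over the $1$-, $2$- and $3$-handles of $W_f$, where one must extend a fiberwise embedding of the $S^{n-3}$-bundle $q_f$ across each handle, using the $2$-connectivity of $\mathrm{Emb}(S^{n-3},\R^{k-3})$ coming from Proposition~\ref{budney-lemma}; this is exactly what pins down the hypothesis $k \geq (3n+3)/2$. Within the corollary itself there is no new obstacle — one only has to check that the dimension $k$ is large enough both for Theorem~\ref{main2} to apply and for $W_f$ to embed in $\R^{k-3}$, and to handle the bookkeeping of disconnected source manifolds, all of which follow immediately from $n \geq 5$ and $k \geq (3n+3)/2$.
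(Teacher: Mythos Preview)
Your proposal is correct and follows exactly the approach indicated by the paper, which simply states that Theorem~\ref{main2-cor} follows from Theorem~\ref{main2} ``by using the same method as that in the proof of Theorem~\ref{main-cor}.'' Your write-up in fact supplies more detail than the paper does (the reduction to connected $M$, the Whitney embedding of $W_f$ into $\R^{k-3}$, and the tubular-neighborhood extension of $G$ using that $\bar f$ is an immersion so the vertical $\R^{k-3}$-direction is transverse to $G(W_f)$), all of which are straightforward checks.
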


Furthermore, Theorem~\ref{main2-cor} can be generalized as follows.

\begin{thm}\label{generalization}
Let $f:M \to \R^p$ be a special generic map of
a closed orientable $n$-dimensional manifold with $n>p \geq 1$.
Then $f$ lifts to an embedding into $\R^k$ with respect to the natural projection $\pi:\R^k \to \R^p$
if the normal bundle $\nu_f$ of the singular point set $S(f)$ in $M$ is  trivial and $k \geq \max\{(3n+3)/2,n+p+1\}$.
\end{thm}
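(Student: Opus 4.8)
The plan is to imitate the proofs of Theorems~\ref{main2} and~\ref{main2-cor}, with the target dimension $3$ replaced by an arbitrary $p$. As there, one argues in two stages: first one builds an embedding lift $e:M\to W_f\times\R^{k-p}$ of the quotient map $q_f$ with respect to the projection $P:W_f\times\R^{k-p}\to W_f$, and then one thickens it to an embedding $F:M\to\R^k$ with $\pi\circ F=\bar f\circ q_f=f$. Only two features differ from the case $p=3$ already treated: by Theorem~\ref{stein} the Stein quotient $W_f$ is now a compact orientable $p$-manifold with non-empty boundary, so it is built from a collar of $\partial W_f$ by attaching handles of indices $1,\dots,p$ (in particular there may be a $p$-handle); and the concluding thickening must be done without assuming that $W_f$ itself embeds in $\R^{k-p}$.

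The quantitative input is the assertion that, when $n>p\ge 1$ and $k\ge\max\{(3n+3)/2,\ n+p+1\}$, the embedding space $\mathrm{Emb}(S^{n-p},\R^{k-p})$ is $(p-1)$-connected. Indeed $k\ge n+2$ (since $(3n+3)/2\ge n+2$ for $n\ge 1$) and $n-p\ge 1$, so Proposition~\ref{budney-lemma} applies with $(n,k)$ replaced by $(n-p,k-p)$ and shows that $\mathrm{Emb}(S^{n-p},\R^{k-p})$ is at least $c$-connected, where
\[
c=\min\{\,2(k-p)-3(n-p)-4,\ (k-p)-(n-p)-2\,\}=\min\{\,2k-3n+p-4,\ k-n-2\,\}.
\]
The first entry of this minimum is $\ge p-1$ precisely when $2k\ge 3n+3$, and the second is $\ge p-1$ precisely when $k\ge n+p+1$; thus both halves of the hypothesis on $k$ are needed, and together they give exactly $c\ge p-1$.

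Granting this, the construction of $e$ is carried out handle by handle exactly as in the proof of Theorem~\ref{main2}. Because $\nu_f$ is trivial, the composition $q_f^{-1}(C)\to C\to\partial W_f$ over a collar $C=\partial W_f\times[0,1]$ of $\partial W_f$ is the trivial $D^{n-p+1}$-bundle associated to $\nu_f$, and this yields an embedding lift of $q_f|_{q_f^{-1}(C)}$ into $C\times\R^{k-p}$. One then extends the lift over the handles in order of increasing index. To cross a handle of index $j$ ($1\le j\le p$), the fiberwise embeddings already defined over the previously built piece determine, over the boundary sphere $S^{j-1}$ of the core of the handle, a continuous map $S^{j-1}\to\mathrm{Emb}(S^{n-p},\R^{k-p})$; since this space is $(j-1)$-connected, the map extends over the core disk $D^{j}$, hence---after composing with a deformation retraction of the handle onto the union of its attaching region and core---over the whole handle. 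A smooth approximation keeping the values over the attaching region fixed still restricts to an embedding on every $S^{n-p}$-fiber, and hence, being a fiber-preserving injective immersion of a compact piece, is a genuine embedding lift; the pieces glue because they agree on overlaps. Iterating over all handles produces $e:M\to W_f\times\R^{k-p}$ with $P\circ e=q_f$.

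Finally I would thicken $e$ to a lift into $\R^k$. Since $\bar f:W_f\to\R^p$ is an immersion of a compact manifold, its double-point set is $p$-dimensional, so for a generic map $\sigma:W_f\to\R^{k-p}$ the map $\hat f:=(\bar f,\sigma):W_f\to\R^p\times\R^{k-p}=\R^k$ is an embedding: making $(\bar f,\sigma)$ injective over the double-point set amounts to keeping a $p$-dimensional set off the diagonal of $\R^{k-p}\times\R^{k-p}$, which has codimension $k-p>p$ (as $k\ge n+p+1\ge 2p+2$), so it is a generic condition, and $\hat f$ is automatically an immersion. Because $\bar f$ is an immersion, the constant subbundle $\{0\}^{p}\times\R^{k-p}$ of $T\R^{k}|_{\hat f(W_f)}$ is everywhere transverse to $T\hat f(TW_f)$, hence serves as a trivial normal bundle of $\hat f(W_f)$ contained in the fibers of $\pi$; thickening $\hat f$ along it gives an embedding $\Psi:W_f\times\R^{k-p}\hookrightarrow\R^{k}$ with $\pi\circ\Psi=\bar f\circ P$, and then $F:=\Psi\circ e$ is an embedding with $\pi\circ F=f$. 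I expect the only real difficulty to be the bookkeeping: recognizing that a $p$-dimensional $W_f$ forces handles of index up to $p$, so that $(p-1)$-connectivity of $\mathrm{Emb}(S^{n-p},\R^{k-p})$---rather than merely $2$-connectivity---is what is required, and then verifying that the two Budney bounds are matched term by term by the two quantities in the hypothesis on $k$; a secondary point is to notice that one need not embed $W_f$ into $\R^{k-p}$, only into $\R^k$ over $\bar f$, which is much cheaper and already follows from $k\ge n+p+1$.
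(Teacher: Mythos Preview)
Your proposal is correct and follows the same strategy the paper sketches: the paper's proof of Theorem~\ref{generalization} is only the two-sentence remark that the method of Theorems~\ref{main2} and~\ref{main2-cor} carries over verbatim once one knows that $\mathrm{Emb}(S^{n-p},\R^{k-p})$ is $(p-1)$-connected, and your write-up supplies precisely that handle-by-handle argument together with the verification (via Proposition~\ref{budney-lemma}) that the two quantities in the hypothesis on $k$ match the two Budney bounds.

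One point worth flagging is your treatment of the final thickening step. The paper, in the proof of Theorem~\ref{main-cor}, simply embeds $W_f$ into $\R^{k-p}$ and then takes $G=(\bar f,\bar e)$; transported to general $p$ this would ask for $k-p\ge 2p$, which is not obviously guaranteed by $k\ge\max\{(3n+3)/2,\,n+p+1\}$ when $n$ is close to $p$. Your variant---choosing a generic $\sigma:W_f\to\R^{k-p}$ that merely separates the double points of the codimension-zero immersion $\bar f$, so that $(\bar f,\sigma)$ is an embedding---needs only $k-p>p$, which does follow from $k\ge n+p+1\ge 2p+2$. This is a genuine, if minor, improvement over the literal transcription of the paper's argument, and it closes a gap the paper's sketch leaves implicit.
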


This is proved by the (almost) same method as that in the proofs of Theorems~\ref{main2} and \ref{main2-cor}.
The key ingredient to proving Theorem~\ref{generalization} is that
$\mathrm{Emb}(S^{n-p},\R^{k-p})$ is $(p-1)$-connected.

\section{Acknowledgements}
The author would like to express his sincere gratitude to
Professor Osamu Saeki for useful comments and suggestions.
The author would also like to express his thanks to
Professor Masamichi Takase for invaluable comments and suggestions.

\end{document}